\newtheorem{theorem}{Theorem}[section]
\newtheorem{lemma}[theorem]{Lemma}
\newtheorem{sublem}[theorem]{Sublemma}
\newtheorem{proposition}[theorem]{Proposition}
\theoremstyle{definition}
\newtheorem{definition}{Definition}
\newtheorem{remark}[theorem]{Remark}
\newtheorem{example}{Example}
\begin{document}

\title{Existence of  torsion-low maximal  isotopies for  area preserving surface homeomorphisms}
\author{Jingzhi  YAN \footnote{
jyan@scu.edu.cn  
}}

\maketitle

\begin{abstract}
The  paper concerns  area preserving homeomorphisms  of surfaces  that are isotopic to the identity. The purpose of the paper is to find a maximal  isotopy  such that we can give a fine description of the dynamics of its transverse foliation. We will define a sort of identity isotopies: torsion-low isotopies. In particular, when $f$ is a diffeomorphism with finitely many fixed points such that every fixed point is not degenerate,   an identity isotopy $I$ of $f$ is torsion-low if and only if for every  point $z$ fixed along the isotopy, the (real) rotation number $\rho(I,z)$ (which is well defined when one blows up $f$ at $z$) is contained in $(-1,1)$. We will prove the existence of torsion-low maximal  isotopies, and  will deduce the local dynamics of the transverse foliations of any torsion-low maximal isotopy near any isolated singularity.
\end{abstract}

\smallskip
\noindent \textbf{Keywords.} Surface homeomorphism, Transverse foliation, Maximal isotopy, Rotation number, Local rotation set.

\section{Introduction and definitions}
 In this article, we will study maximal isotopies and transverse foliations for  homeomorphisms of oriented surfaces.  The objects are fruitful tools in the study of homeomorphisms of surfaces. For example, one can prove the existence of periodic orbits in several cases \cite{lecalvezfeuilletage}, \cite{lecalvezcreateperiodicorbit}, \cite{Yan};  one can give  precise descriptions  of the dynamics of some  homeomorphisms  of the plane \cite{lerouxrotation}, of the annulus \cite{Matsumotoannulushomeo}, of the torus $\mathbb{T}^2$ \cite{davalos}, \cite{Davalos2},   \cite{Andres},\cite{korepecki},  and also of the general compact oriented surface  \cite{LeCalvezfocingtheory}; \ldots.

More precisely, an \emph{identity isotopy} $I$ of a surface homeomorphism $f$ is a continuous family of homeomorphisms $(f_t)_{t\in[0,1]}$  with $f_0=\mathrm{Id}$ and $f_1=f$, and a \emph{maximal isotopy} is an identity isotopy $I=(f_t)_{t\in[0,1]}$ of $f$ such that $f$ does not have any fixed point whose trajectory along $I$ is contractible in $M\setminus\mathrm{Fix}(I)$, where  $\mathrm{Fix}(I)=\cap_{t\in[0,1]}\mathrm{Fix}(f_t)$ is the fixed points set of $I$. To such an isotopy, one can associate \emph{transverse foliations} \cite{lecalvezfeuilletage}, i.e. oriented singular foliations $\mathcal{F}$ whose singular points set coincides with the fixed points set of $I$,  such that the trajectory of each point $z\notin \mathrm{Fix}(I)$  is homotopic (relatively to the end points) to a path that is positively transverse to the foliation (that means the path locally crosses every leaf from the left to the right). The dynamics of the maximal isotopy and the transverse foliation are ``dual" to each other. One example is the  isotopy  defined by the flow induced by a vector field $X$, and  the  foliation whose leaves are the integral curves of a vector field $Y$  transverse to $X$. In particular, we suppress  $X$  a Hamiltonian vector field and $Y$   the gradient vector field of the Hamiltonian function, so we can view the maximal isotopy and the transverse foliation as a kind of generalizations of the Hamiltonian flow and   a gradient flow. Indeed, if $f$ is defined by a (time dependent) Hamiltonian isotopy and $I$ is a maximal isotopy, then one can prove that the dynamics of a transverse foliation look like the dynamics of a gradient flow \cite{lecalvezfeuilletage}.

 A maximal isotopy always exists \cite{BCLRisotopies}, but is not unique.
A natural question is whether there exist maximal isotopies ``better" than the others. In the example of last paragraph,  the index of the foliation at each isolated singular point coincides with the Lefschetz index of the homeomorphism at the same point. Is there a class of maximal isotopies with this property in more general cases? (We will see that the torsion-low maximal isotopies in this article keep this property.)

Another observation is the following: among all the maximal isotopies, there may exist some  maximal isotopies that fix more points than the others. So we want to find some criteria for maximal isotopies  to fix more points.
When $f:M\rightarrow M$ is an area preserving diffeomorphism, we have a criterion based on the blow-up rotation number. More precisely, at each fixed point $z_0$ of $I$, we can give a natural  blow-up at $z_0$ by replacing $z_0$ with the unit circle of the tangent space $U_{z_0}M$. The extension of $f$ to this circle can be induced by the derivative $Df(z_0)$. We can define a \emph{blow-up rotation number} $\rho(I,z_0)\in\mathbb{R}$, which is a representative of the Poincar\'e's   rotation number of  the homeomorphism on the circle added (see Section \ref{S: pre-blow up}). Moreover,  if there exists a fixed point $z_0\in\mathrm{Fix}(I)$ such that $|\rho(I,z_0)|>1$ and that the connected component $M_0$ of $M\setminus(\mathrm{Fix}(I)\setminus\{z_0\})$ containing $z_0$ is not homeomorphic to  a sphere or a plane, then we can find another fixed point of $f$ that is not a fixed point of $I$ as a corollary of a generalized version of Poincar\'e-Birkhoff theorem.
 Let us explain briefly the reason:   in this case, $z_0$ is  isolated  in $\mathrm{Fix}(I)$. We consider the universal cover $\pi:\widetilde{M}\rightarrow M_0$ and the lift $\widetilde{f}$ of $f|_{M_0}$ to $\widetilde{M}$ that fixes every point in $\pi^{-1}\{z_0\}$. Fix $\widetilde{z}_0\in\pi^{-1}(z_0)$ and  consider the blow-up of $\widetilde{f}$ at $\widetilde{z}_0$. One gets a homeomorphism of the annulus $(\widetilde{M}\setminus\{\widetilde{z}_0\})\sqcup U_{z_0}\widetilde{M}$. By a generalized version of Poincar\'e-Birkhoff theorem, this homeomorphism has a fixed point $\widetilde{z}$ such that $\pi(\widetilde{z})$ is a fixed point of $f$ but is not a fixed point of $I$. Moreover, if $\mathrm{Fix}(I)$ is finite, by a technical discussion,  one can prove the existence  of another maximal isotopy that fixes $\mathrm{Fix}(I)\setminus\{z_0\}$ and has no less (probably more) fixed points than $I$ (see Section \ref{S: existence of a  torsion-low maximal isotopy}).  Then, it is reasonable to think that a maximal  isotopy  $I$ such that
\[-1\le \rho(I,z)\le 1 \textrm{ for all } z\in\mathrm{Fix}(I),\]
fixes more fixed points than a usual one\footnote{For a generic diffeomorphism (all the fixed points are non degenerate), both inequations can be  strict.}. In this article, we will see that torsion-low maximal isotopies satisfy the inequalities.

\bigskip

 Now, we will define   torsion-low maximal isotopies and give an exact description about what we will do in this article.

Let $M$ be a connected and oriented surface. We  write $f:(W,z_0)\rightarrow (W',z_0)$ for an orientation preserving homeomorphism between two neighborhoods $W$ and $W'$ of $z_0\in M$ such that $f(z_0)=z_0$. Such a local homeomorphism $f$ is called an \emph{orientation preserving local homeomorphism at $z_0$}. A \emph{local isotopy} $I$ of $f$  is a continuous family of local homeomorphisms $(f_t)_{t\in[0,1]}$ fixing $z_0$ (see p.146 of \cite{lecalveztourner}  for a more precise definition).  We say that $z$ is a \emph{contractible} fixed point of $f$ associated to the local isotopy $I$ if the trajectory $t\mapsto f_t(z)$  of $z$ along $I$ is a loop homotopic to zero in $W\setminus\{z_0\}$.
Let $\mathcal{F}$ be a singular oriented foliation on $M$. We say that $\mathcal{F}$  is \emph{locally transverse} to a local isotopy $I=(f_t)_{t\in[0,1]}$ at $z_0$, if there exists a neighborhood $U_0$ of $z_0$ such that $\mathcal{F}|_{U_0}$ has exactly one singularity $z_0$, and if for every  sufficiently small neighborhood $U$ of $z_0$, there exists a neighborhood $V\subset U$ of $z_0$ such that for all $z\in V\setminus \{z_0\}$, the trajectory $t\mapsto f_t(z)$  of $z$ along  $I$ is homotopic (relatively to the end points) in $U\setminus\{z_0\}$  to a path that is  positively transverse to $\mathcal{F}$.

Let  $f: (W,z_0)\rightarrow (W',z_0)$ be an orientation  preserving local homeomorphism at $z_0$. We will generalize the definitions of ``positive type'' and ``negative type'' by Shigenori Matsumoto \cite{Matsumoto}. We say that $I$  has a \emph{positive} (resp. \emph{negative} or \emph{zero}) \emph{rotation type} at $z_0$ if there exists a  foliation $\mathcal{F}$  locally transverse to $I$ such that $z_0$ is a sink (resp. source or saddle) of $\mathcal{F}$ \footnote{The precise definition of a sink, a source and a saddle will be given in Section \ref{S: pre-local dynamics of transverse foliation}.}.  Two local isotopies $I$ and $I'$ have the same rotation type if they are locally homotopic. Moreover, when $f$ is area preserving and $z_0$ is an isolated fixed point, we can prove that a local isotopy of $f$ has exactly one of the previous rotation types (see Section \ref{S: rotation type at an isolated fixed point}).

Recall that $\pi_1(\mathrm{homeo}_0(\mathbb{R}^2,0))\cong \mathbb{Z}$, where $\mathrm{homeo}_0(\mathbb{R}^2,0)$ is the space of homeomorphisms of $\mathbb{R}^2$ fixing $0$ and isotopic to the identity (see \cite{McCarty} or \cite{Hanmstrom-Homeotopygroups}). So,  we can  define a preorder on the set of all local isotopies of $f$ such that $I\lesssim I'$ if and only if there exists $k\ge 0$ such that $I'$ is locally homotopic to $J_{z_0}^kI$, where $J_{z_0}=(R_{2\pi t})_{t\in[0,1]}$ is the local isotopy of the identity and each $R_{2\pi t}$ is the counter-clockwise rotation through an angle $2\pi t$ about the center $z_0$.

\begin{definition}\label{Def: torsion-low-isolated}
 We  say that a local isotopy $I$ of an orientation and area preserving local homeomorphism $f$ at an isolated fixed point  is \emph{torsion-low} if
 \begin{itemize}
 \item[-] every local isotopy $I'>I$ has a positive rotation type;
\item[-] every local isotopy $I'<I$ has a negative rotation type.
\end{itemize}
\end{definition}
We can show
 that such a local isotopy always exists. Formally, we have the following result:

\begin{proposition}\label{P: local rotation type}
Let $f: (W, z_0)\rightarrow (W',z_0)$ be an orientation and area preserving local homeomorphism with an isolated fixed point $z_0$.   Then,
\begin{itemize}
\item[-] a local isotopy of $f$  has exactly one of the three kinds of rotation  types;
 \item[-]  there exists a local isotopy $I_0$ that is torsion-low at $z_0$. Moreover, $I_0$  has a zero rotation type if the Lefschetz index $i(f,z_0)$ is different from $1$, and has  either a positive or a negative rotation type if the Lefschetz index $i(f,z_0)$ is equal to $1$.
\end{itemize}
\end{proposition}

When $z_0$ is  a non-isolated fixed point of a local homeomorphism $f: (W,z_0)\to (W', z_0)$, one may fail to find a locally transverse foliation $\mathcal{F}$ for a local isotopy $I$ of $f$ at $z_0$. Still, we  get the following proposition that generalize Proposition \ref{P: local rotation type}.

\begin{proposition}\label{P: local rotation type non-isolated}
Let $f: (W, z_0)\rightarrow (W',z_0)$ be an orientation and area preserving local homeomorphism with a non-isolated fixed point $z_0$, and $I$ a local isotopy of $f$. Then,  exactly one of the following three situations occurs:
\begin{itemize}
\item[i)]  $z_0$ is accumulated by contractible fixed points of $f$ associated to $I$,
\item[ii)] $I$ has a positive rotation type,
\item[iii)] $I$ has a negative rotation type.
\end{itemize}
Moreover,
\begin{itemize}
\item[-]if $I$ has a positive (resp. negative) rotation type, every local isotopy $I'>I$ (resp. $I'<I$) has a positive (resp. negative) rotation type;
\item[-]if $z_0$ is accumulated by contractible fixed points of $f$ associated to $I$, for every local isotopy $I'>I$, either situation i) or situation ii) occur, and for every local isotopy $I'<I$, either situation i) or situation iii) occur.
 \end{itemize}
\end{proposition}

 We will generalize the definition of torsion-low isotopy as follows:

\begin{definition}\label{Def: torsion-low-nonsolated}
We  say that a local isotopy  $I$ of an orientation and area preserving local homeomorphism $f$ at $z_0$ is \emph{torsion-low} if
\begin{itemize}
\item[-] for every local isotopy $I'>I$, either $I'$ has a positive rotation type, or $z_0$ is accumulated by contractible fixed points of $f$ associated to $I'$;
\item[-] for every local isotopy $I'<I$, either $I'$ has a negative rotation type, or $z_0$ is accumulated by contractible fixed points of $f$ associated to $I'$.
\end{itemize}
\end{definition}

\begin{remark}
When $z_0$ is an isolated fixed point, the definition coincides with Definition \ref{Def: torsion-low-isolated}.
\end{remark}

\begin{remark}\label{R: ac means torsion-low}
If $z_0$ is accumulated by contractible fixed points of $f$ associated to $I$, then $I$, $J_{z_0}I$, $J_{z_0}^{-1}I$ are torsion-low,  where $J_{z_0}=(R_{2\pi t})_{t\in[0,1]}$ is the local isotopy of the identity and each $R_{2\pi t}$ is the counter-clockwise rotation through an angle $2\pi t$ about the center $z_0$.
\end{remark}

\medskip

When $f$ is a diffeomorphism  fixing $z_0$, and $I$ is a local isotopy of $f$, we can blow-up $f$ at $z_0$ and  define the blow up rotation number $\rho(I,z_0)$. We say that $z_0$ is a \emph{degenerate} fixed point of $f$ if $1$ is an eigenvalue of  $Df(z_0)$.  When $f$ is a homeomorphism, one may fail to find a blow-up at $z_0$, and cannot define a rotation \textquotedblleft number". However, we can  generalize it and define a \emph{local rotation set} $\rho_s(I, z_0)$ which was introduced by Le Roux \cite{lerouxrotation} and will be recalled  in Section \ref{S: pre-local rotation set}.  A torsion-low local isotopy has the following properties:
\begin{proposition}\label{P: rotation set of a  torsion-low local isotopy}
Let $f: (W,z_0)\rightarrow (W',z_0)$ be an orientation and area  preserving homeomorphism, and $I$ a torsion-low local isotopy of $f$. Then,
\[\rho_s(I,z_0)\cap [-1,1]\ne \emptyset.\]
In particular, if $z_0$ is an isolated fixed point of $f$,
\[\rho_s(I,z_0)\subset [-1,1].\]
If $f$ can be blown up at $z_0$, the rotation set is reduced to  a single point in $[-1,1]$.
Moreover, if  $f$ is  a diffeomorphism in a neighborhood of $z_0$, the blow-up rotation number satisfies
\[-1\le \rho(I,z_0)\le 1,\]
and  the inequalities are both strict when $z_0$ is not degenerate.
\end{proposition}

 When $z_0$ is a non-isolated fixed point, one may fail to find a torsion-low local isotopy in some particular cases. In fact, there exists  an orientation and area  preserving local homeomorphism whose local rotation set is reduced to $\infty$, and hence by Proposition \ref{P: rotation set of a  torsion-low local isotopy} there does not exist any torsion-low isotopy of this local homeomorphism (see Example \ref{Ex: local rotation set is infty}).

\medskip

However, when $f$ is an  area preserving homeomorphism of $M$ that  is isotopic to the identity, we can find a maximal  isotopy $I$ of $f$ that is torsion-low (as a local isotopy) at every fixed point of $I$. We will call such an isotopy a torsion-low maximal isotopy, and will prove the existence of a torsion-low maximal isotopy, which is the main result of this article, in Section \ref{S: existence of a  torsion-low maximal isotopy}.  More precisely, we have the following definition and theorem.
\begin{definition}
An identity isotopy of $f$ is  \emph{torsion-low} if it is torsion-low as a local isotopy  at each of its fixed points.
\end{definition}

\begin{theorem}\label{T: main}
Let $f$ be an  area preserving  homeomorphism of  $M$ that is isotopic to the identity. Then, there exists a  torsion-low maximal  isotopy $I$ of $f$.
\end{theorem}

\begin{remark}\label{R: area preserving is neccesary for the existence of globle torsion-low isotopy}
The area preserving condition is necessary for the result of this theorem. Even if $f$ has only finitely many fixed points and is area preserving near each fixed point,  one may still fail to find a   maximal isotopy $I$ that is torsion-low at every $z\in\mathrm{Fix}(I)$ (see Example \ref{Ex: area preserving condition is necessay}).
\end{remark}

 A torsion-low maximal isotopy gives more information than a usual one. We have  the following  three results related to the questions at the beginning of this section.  The first one will be proved in Section \ref{S: Local dynamics  of the locally transverse foliation of a torsion-low local isotopy}, the second will be proved in the end of Section \ref{S: existence of a  torsion-low maximal isotopy}, and  the third one is an immediately corollary of Proposition \ref{P: rotation set of a  torsion-low local isotopy} and Theorem \ref{T: main}.

 \begin{proposition}\label{P: local dynamic of the transverse foliation of a torsion-low isotopy}
Let $f$ be an  area preserving homeomorphism of  $M$ that is isotopic to the identity,  $I$  a maximal  isotopy of $f$ that is torsion-low  at  $z\in\mathrm{Fix}(I)$, and $\mathcal{F}$  a transverse foliation of $I$. If $z$ is an isolated singularity of  $\mathcal{F}$, then we have the following results:
\begin{itemize}
\item[-]if $z$ is an isolated fixed point of $f$ such that the Lefschetz index $i(f,z)\ne 1$, then $z$ is a saddle of $\mathcal{F}$ and $i(\mathcal{F},z)=i(f,z)$;
\item[-]if $z$ is an isolated fixed point of $f$ such that the Lefschetz index $i(f,z)=1$, or if $z$ is not isolated in $\mathrm{Fix}(f)$, then  $z$ is a sink or a source of $\mathcal{F}$.
\end{itemize}
 \end{proposition}

\begin{proposition}\label{P: torsion-low isotopy has most fixed points}
Let $f$ be an area preserving homeomorphism of $M$ that is isotopic to the identity and has finitely many fixed points.
Let
\[n=\max\{\#\mathrm{Fix}(I): I \text{ is an identity isotopy of } f\}.\]
Then there exists a torsion-low maximal isotopy of $f$ with $n$ fixed points.
\end{proposition}

\begin{proposition}\label{P: diffeo}
Let  $f$ be an  area preserving diffeomorphism of  $M$ that is isotopic to the identity. Then,  there exists a maximal  isotopy $I$ of $f$, such that for all $z\in\mathrm{Fix}(I)$,
\[-1\le \rho(I,z)\le 1.\]
Moreover, both inequalities are  strict when $z$ is not  degenerate.
\end{proposition}

\begin{remark}\label{R: inegality is not strict}
One may fail to get the strict inequalities without the assumption of non-degeneracy (see Example \ref{Ex: inequality not strict}).
\end{remark}

\bigskip

Now we give an organization of this article: In Section \ref{S: Preliminaries}, we will recall some definitions and known results that will be essential in the proofs of our results.
In Section \ref{S: local rotation type}, we will study the local rotation types for the isotopy of an orientation and area preserving local homeomorphism, and will prove Proposition \ref{P: local rotation type} and Proposition \ref{P: local rotation type non-isolated}.
 In Section \ref{S: property of towsion low isotopy}, we will first study the local rotation set of a torsion-low local isotopy and prove Proposition \ref{P: rotation set of a  torsion-low local isotopy}, then we study the local dynamics of the locally transverse foliation of a torsion-low local isotopy and prove Proposition \ref{P: local dynamic of the transverse foliation of a torsion-low isotopy}.
 In Section \ref{S: existence of a  torsion-low maximal isotopy},  we will prove the existence of a  torsion-low maximal  isotopy (Theorem \ref{T: main}) and will prove Proposition \ref{P: torsion-low isotopy has most fixed points}. In Section \ref{S: examples}, we will give  some explicit examples to show the optimality of our results.
 In Appendix \ref{S: prime ends compactification}, we will give a way to blow up the homeomorphism and get the blow-up rotation number, which will be used in our proof of the main result.
  In Appendix \ref{S: generating fucntion}, we will introduce a way to construct maximal isotopies and  transverse foliations by  generating functions, which will be used in the constructing of our examples.

\section{Preliminaries}\label{S: Preliminaries}

\subsection{Unlinked sets and maximal isotopies}\label{S: pre-Jaulent's preorder}

  In this section, we will recall some results about the isotopies of surface homeomorphisms due to Jaulent \cite{Jaulent} and  B\'eguin, Crovisier and Le Roux \cite{BCLRisotopies}.

  Let $f$ be a homeomorphism of an oriented and connected surface $M$ that is isotopic to the identity.  We say that an identity isotopy $I=(f_t)_{t\in[0,1]}$ of $f$  is an \emph{isotopy  relatively to $X$} if $f_t(x)\equiv x$  for all $x\in X$  and $t\in[0,1]$. We say that $f$ is \emph{isotopic to the identity relatively to $X$} if there exists an identity isotopy  of $f$ relatively to $X$.  We say that a subset $X\subset\mathrm{Fix}(f)$ is \emph{unlinked}  if $f$ is isotopic to the identity relatively to $X$.

   We denote by $(X, I)$ the couple that consists of an unlinked closed subset $X\subset\mathrm{Fix}(f)$ and an identity isotopy $I=(f_t)_{t\in [0,1]}$ of $f$ relatively to $X$. Let $\mathcal{I}$ be the set of such couples with the following preorder: $(X, I)\precsim (Y, I')$, if
\begin{itemize}
\item[i)] $X\subset Y\subset \mathrm{Fix}(f)$ are unlinked,
\item[ii)] for every $z\in M\setminus X$, its  trajectories  along $I'$  and  $I$ are homotopic  in  $M\setminus X$.
\end{itemize}

The preorder $\precsim$ is well defined. Moreover, if  $(X, I)\precsim (Y, I')$ and $(Y, I')\precsim (X, I)$, then one has $X=Y$ and the trajectories of each $z\in M\setminus X$ along $I$ and $I'$ are homotopic in $M\setminus X$. In this case, we  write $(X,I)\sim(Y, I')$, where $\sim$ is an equivalence relation.
We should note the following facts:
\begin{itemize}
  \item[-] if $(X,I)\sim (X, I')$, then for any connected component $M_0$ of $M\setminus X$, the lifts of $f|_{M_0}$ to the universal covering space  obtained by lifting the restricted isotopies respectively coincide.
  \item[-] if $M\setminus X$ is not homeomorphic to an annulus or to a torus, two couples $(X,I)$ and $(X, I')$ are always equivalent (see  \cite[Remark 1.6]{Jaulent} or   \cite[Section 2.3]{BCLRisotopies}).
\end{itemize}

 We call $(Y,I')\in\mathcal{I}$  an \emph{extension} of $(X,I)$ if $(X,I)\precsim (Y, I')$; we call  $I'$  an \emph{extension} of $(X,I)\in\mathcal{I}$ if $(X,I)\precsim(\mathrm{Fix}(I'),I')$; we call $I'$  an \emph{extension} of $I$ if $I'$ is an extension of $(\mathrm{Fix}(I),I)$.  We say that $I'$ is a \emph{maximal  extension} if $(\mathrm{Fix}(I'),I')$ is maximal in $(\mathcal{I},\precsim)$.

We call a fixed point $z$ of $f$ a \emph{contractible} fixed point associated to $I$, if the trajectory of $z$ along $I$ is homotopic to zero in $M$.  By  \cite[Lemma A.8]{BCLRisotopies}, we have the following result:

\begin{proposition}\label{P: pre-maximal implies no contractible fixed point}
Let $(X,I)\in\mathcal{I}$. If $f|_{M\setminus X}$ has a contractible fixed point $z$ associated to $I|_{M\setminus X}$, then there exists $(X\cup\{z\}, I')\in\mathcal{I}$ such that  $(X, I)\precsim(X\cup\{z\}, I')$. In particular, if $(X,I)$ is  maximal in $(\mathcal{I},\precsim)$, $f|_{M\setminus X}$ has no contractible fixed point associated to $I_{M\setminus X}$.
\end{proposition}

We have the following  result from  \cite[Theorem 1 and Corollary 2.9]{BCLRisotopies}.

\begin{proposition}\label{P: pre-exist uper bound of  isotopies sequence}
If $\{(X_\alpha, I_{\alpha})\}_{\alpha\in J}$ is a totally ordered chain  in  $(\mathcal{I},\precsim)$, then there exists  $(X_{\infty},I_{{\infty}})\in\mathcal{I}$ that is an upper bound of the this chain, where $X_{\infty}=\overline{\cup_{\alpha\in J}X_\alpha}$
\end{proposition}

Also, we need  the following two results.

\begin{proposition}\label{T: pre-exist max isotopy}(\cite[Corollary 1.3]{BCLRisotopies})
For every $(X,I)\in\mathcal{I}$, there exists a maximal element $(X', I')\in\mathcal {I}$ such that $(X,I)\precsim (X', I')$.
\end{proposition}

\begin{proposition}\label{P: pre-connected implies unlinked in plane}(\cite[Corollary 1.1]{BCLRisotopies})
If $f$ is an orientation preserving homeomorphism of the plane, and if $X\subset \mathrm{Fix}(f)$ is a connected and closed subset, then there exists an identity isotopy $I$ of $f$ such that $X\subset \mathrm{Fix}(I)$.
\end{proposition}

\subsection{Local dynamics of transverse foliations}\label{S: pre-local dynamics of transverse foliation}

Let $f:(W,z_0)\rightarrow (W',z_0)$ be an orientation preserving local homeomorphism at $z_0$, $I$ a local isotopy of $f$, and  $\mathcal{F}$ a locally transverse foliation of $I$.
In this section, we will describe the local properties of $\mathcal{F}$ near  $z_0$.

 We call $z_0$ a \emph{sink}  (resp. a\emph{ source}) if  there is a homeomorphism $h: U\rightarrow \mathbb{D}$ which sends  $z_0$ to $0$ and sends the restricted foliation $\mathcal{F}|_{U\setminus\{z_0\}}$ to the radial foliation of $\mathbb{D}\setminus\{0\}$ with the leaves toward (resp. backward) $0$, where $U$ is a neighborhood of $z_0$ and $\mathbb{D}$ is the unit disk. A \emph{petal} of $\mathcal{F}$ is a closed topological disk whose boundary is the union of a leaf and a singularity. Let $\mathcal{F}_0$ be the foliation on $\mathbb{R}^2\setminus\{0\}$ whose leaves are the horizontal lines except the $x-$axis which is cut  into two leaves. Let $S_0=\{y\ge 0:x^2+y^2\le 1\}$ be the half-disk.  We call a closed topological disk $S$ a \emph{hyperbolic sector} if there exist
\begin{itemize}
\item[-] a closed set $K\subset S$ such that $K\cap\partial S$ is reduced to a singularity $z_0$ and $K\setminus\{z_0\}$ is the union of the leaves of $\mathcal{F}$ that are contained in $S$,
\item[-] a continuous map $\phi: S\rightarrow S_0$ that maps $K$ to $0$ and the leaves of $\mathcal{F}|_{S\setminus K}$ to the leaves of $\mathcal{F}_0|_{S_0}$.
\end{itemize}
\begin{figure}[h]
  \subfigure[the hyperbolic sector model $S_0$]{
    \begin{minipage}[t]{0.3\linewidth}
      \centering
   \includegraphics[width=3cm]{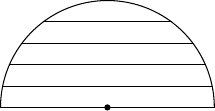}
    \end{minipage}}
  \subfigure[a pure hyperbolic sector]{
    \begin{minipage}[t]{0.3\linewidth}
      \centering
     \includegraphics[width=2cm]{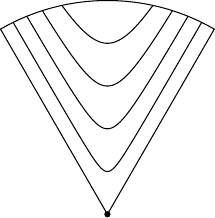}
    \end{minipage}}
    \subfigure[a strange hyperbolic sector]{
    \begin{minipage}[t]{0.3\linewidth}
      \centering
   \includegraphics[width=2cm]{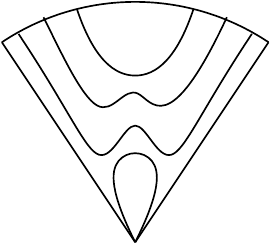}
    \end{minipage}}
  \caption{The hyperbolic sectors}
\end{figure}
The index $i(\mathcal{F},z_0)$ of $\mathcal{F}$  is  well defined (see p.150 of \cite{lecalveztourner} for the precise definition). Due to Le Roux,  we have the following proposition (see  \cite[ Appendix B]{lerouxrotation}):
\begin{proposition}\label{P: pre-dynamics of a foliation}
 We have one of the following cases:
 \begin{itemize}
 \item[i)](sink or source) there exists a neighborhood of $z_0$ that contains neither a closed leaf, nor a petal, nor a hyperbolic sector;
 \item[ii)](cycle) every neighborhood of $z_0$ contains a closed leaf;
 \item[iii)](petal) every small neighborhood of $z_0$ contains a petal, and does not contain any hyperbolic sector;
  \item[iv)](saddle) every small neighborhood of $z_0$ contains a hyperbolic sector,  and does not contain any petal;
 \item[v)](mixed) every neighborhood of $z_0$ contains both a petal and a hyperbolic sector.
 \end{itemize}
Moreover, $i(\mathcal{F},z_0)$ is equal to $1$ in the first two cases,  is strictly larger than $1$ in the petal case, and is strictly smaller than $1$ in the saddle case.
\end{proposition}

\begin{remark}\label{R: pre-local dynamics of foliation-area preserving}
In particular, when $f$ is area preserving, $z_0$ is a sink, or a source, or a saddle of $\mathcal{F}$.
\end{remark}

\begin{remark}
 Note that for a maximal isotopy $I$ and a transverse foliation $\mathcal{F}$ with an isolated singularity $z_0$, if we view  $I$ as a local isotopy at $z_0$, then $\mathcal{F}$ is also locally transverse to $I$  except in the case where $\mathcal{F}$ does not have any other singularity and  $M\setminus\{z_0\}$ is homeomorphic to $\mathbb{R}^2$ (\cite[Proposition 3.4]{lecalveztourner}). In particular, if the homeomorphism is area preserving, then $\mathcal{F}$ is locally transverse to $I$ at every isolated singularity, and we can give a classification of the isolated singularities of $\mathcal{F}$.
\end{remark}

Furthermore, when $z_0$ is an isolated fixed point of $f$, we have the following proposition. The first part of case i) is a direct corollary \cite[Proposition 1.1 and Proposition 3.3]{lecalveztourner} (or see  \cite[Theorem 4.1]{lerouxrotation}), the second part of case i)  is just Proposition 3.7 of \cite{lecalveztourner}, and the case ii) is a direct corollary of  \cite[ Remark 1.2  and Proposition 3.3]{lecalveztourner}.
\begin{proposition}\label{P: pre-index and rotation type}
Let $f:(W,z_0)\rightarrow (W',z_0)$ be an orientation preserving local homeomorphism at  an isolated fixed point $z_0$.

 \begin{itemize}
\item[i)]
 When the Lefschez index $i(f,z_0)\ne 1$, there exists a  local isotopy $I_0$ of $f$ such that for all locally transverse foliations $\mathcal{F}_0$ of $I_0$, $i(\mathcal{F}_0,z_0)=i(f,z_0)\ne 1$.

Moreover, for any local isotopy $I$  of $f$ and any locally transverse  foliation $\mathcal{F}$ of $I$, we have
\begin{itemize}
 \item[-] $i(\mathcal{F},z_0)=i(\mathcal{F}_0,z_0)$, if $I\sim I_0$;
 \item[-] $z_0$ is a sink of $\mathcal{F}$, if $I>I_0$;
 \item[-] $z_0$ is a source of $\mathcal{F}$, if $I<I_0$.
 \end{itemize}
 \item[ii)] When the Lefschez index $i(f,z_0)=1$, for all local isotopy $I$ of $f$ and all locally transverse foliation $\mathcal{F}$ of $I$, we have
     $i(\mathcal{F},z_0)=i(f,z_0)=1$.
 \end{itemize}
\end{proposition}

\subsection{The blow-up at a fixed point}\label{S: pre-blow up}

Let $f:(W,z_0)\rightarrow (W',z_0)$ be an orientation preserving local homeomorphism.
We say that $f$ can be \emph{blown  up} at $z_0$, if we can replace $z_0$ by a unit circle $S^1$ and extend $f|_{W\setminus\{z_0\}}$ continuously to a homeomorphism between $W\setminus \{z_0\}\sqcup S^1$ and $W'\setminus\{z_0\}\sqcup S^1$. In particular, when $f$ is a diffeomorphism,  the extension  can be induced by the map
\[v\mapsto \frac{Df(z_0) v}{\|Df(z_0)v\|}\]
on the space of unit tangent vectors.

Suppose that $f$ can be blown up at $z_0$, and that $f$ is not conjugate to a contraction  or an expansion. Let us denote by $h$ the extension of $f$ on $S^1$.  Let $I=(f_t)_{t\in[0,1]}$ be a local isotopy of $f$. We choose a small disk $D$ containing $z_0$ and consider the universal cover  $\pi: \widetilde{D} \rightarrow D\setminus\{z_0\}$. Let $(\widetilde{f}_t)_{t\in[0,1]}$ be the lift of $(f_t)_{t\in[0,1]}$  such that $\widetilde{f}_0$ is equal to the identity in its domain of definition.  Let $\widetilde{h}$ be the lift of   $h$ to $\mathbb{R}$ that is a continuous extension of $\widetilde{f}_1$. We define the \emph{blow-up rotation number} $\rho(I,z_0)\in \mathbb{R}$ to  the rotation number of $h$ associated to the lift $\widetilde{h}$.  Jean-Marc Gambaudo, Le Calvez, and Elisabeth P\'ecou \cite{LecalvezthmNaishul} proved that the blow-up rotation numbers do not depend on the choice of $h$,   which generalizes a previous result of Na\u{\i}shul$'$ \cite{Naishul}.

Le Roux \cite{lerouxrotation} studied several cases where $f$ can be blown up. We will need the following one in this article:

\begin{proposition}\label{P: pre-blow-up}
If there exists a locally transverse foliation $\mathcal{F}$ of $I$ that has a leaf $\gamma^+$ from $z_0$ and a leaf $\gamma^-$ toward $z_0$, then $f$ can be blown up at $z_0$. Moreover, under the further assumption that $f$ is not conjugate to a contraction  or an expansion,  the blow-up rotation number $\rho(I,z_0)$ is equal to $0$.
\end{proposition}

\begin{remark}
In particular, we are in this case if $z_0$ is a petal, a saddle or a mixed singularity of $\mathcal{F}$.
\end{remark}

\subsection{The local rotation set}\label{S: pre-local rotation set}

In this section, we recall the definition of the local rotation set by Le Roux  and   describe some of its properties. More details can be found in \cite{lerouxrotation}.

Let $f: (W,0)\rightarrow (W',0)$ be an orientation preserving local homeomorphism at $0\in\mathbb{R}^2$, and $I=(f_t)_{t\in[0,1]}$ be a local isotopy of $f$. Given two neighborhoods $V\subset U$ of $0$ and an integer $n\ge 1$, we define
\[E(U,V,n)=\{z\in U: z\notin V,f^{n}(z)\notin V, f^i(z)\in U \textrm{ for all } 1\le i\le n\}.\]
We define the \emph{rotation set} of $I$ relative to $U$ and $V$ by
\[\rho_{U,V}(I)=\cap_{m\ge 1}\overline{\cup_{n\ge m}\{\rho_n(z): z\in E(U,V,n)\}}\subset[-\infty,\infty],\]
where $\rho_n(z)$ is the average change of  angular coordinate along the trajectory of $z$ during $n$ iterates.
We define the \emph{local rotation set} of $I$ to be
\[\rho_s(I,0)=\cap_U\overline{\cup_V\rho_{U,V}(I)}\subset[-\infty,\infty],\]
where $V\subset U\subset W$ are neighborhoods of $0$.

More generally, when $f:(W,z_0)\rightarrow (W',z_0)$ is an orientation preserving local homeomorphism at $z_0$, we can define the local rotation set  by conjugating $f$ to an orientation preserving local homeomorphism at $0\in\mathbb{R}^2$.

The local rotation set can be empty. However, Le Roux (see \cite{lerouxparabolic} or \cite[Theorem 2.4]{lerouxrotation}) proved that the local rotation set is not empty  unless  the local homeomorphism is conjugate to one of the  following maps:
\begin{itemize}
\item[-] the contraction $z\mapsto\frac{z}{2}$,
\item[-] the expansion $z\mapsto 2z$,
\item[-] a holomorphic function  $z\mapsto e^{i2\pi\frac{p}{q}}z(1+z^{qr})$ with $q,r\in\mathbb{N}^+$ and $p\in\mathbb{Z}$.
\end{itemize}
In particular, when $f$ is area preserving, none of the obove three cases occurs and  the local rotation set is not empty.

\bigskip

We say that $z$ is a \emph{contractible} fixed point of $f$ associated to the local isotopy $I=(f_t)_{t\in[0,1]}$ if the trajectory $t\mapsto f_t(z)$  of $z$ along $I$ is a loop homotopic to zero in $W\setminus\{z_0\}$. We say that $f$ satisfies  the \emph{local intersection condition}, if there exists a neighborhood $V$ of $z_0$, such that  for any Jordan domain $D\subset V$ containing $z_0$, $f(\partial D)\cap \partial D\ne \emptyset$. Note that if $f$ satisfies the local intersection condition, then so does $f^n$ (see  \cite[Lemma 3.5]{lerouxrotation} for example).
In particular, if $f$ is area preserving, then $f$ satisfies the local intersection condition. For  two isotopies (resp. local isotopies) $I=(f_t)_{t\in [0,1]}$ and $I'=(g_t)_{t\in[0,1]}$, we denote by $I^{-1}$ the  isotopy (resp. local isotopy) $(f^{-1}_t)_{t\in [0,1]}$, by $I'I$ the  isotopy (resp. local isotopy) $(\varphi_t)_{t\in[0,1]}$ such that
\[\varphi_t=\left\{\begin{array}{ll} f_{2t} &\mbox{for $t\in[0,\frac{1}{2}]$},\\ g_{2t-1}\circ f &\mbox{for $t\in[\frac{1}{2},1]$},\end{array}\right.\]
and by $I^n$ the isotopy (resp. local isotopy) $\underbrace{I\cdots I}_{n \textrm{ times}}$ for every $n\ge 1$. The local rotation set satisfies the following properties:

\begin{proposition}(\cite{lerouxrotation}) \label{P: pre-rotation set}
Let $f:(W,z_0)\rightarrow (W',z_0)$ be an orientation preserving local homeomorphism, and $I$  a local isotopy of $f$. One has the following results:
\begin{itemize}
\item[i)] for all integer $p,q$, $\rho_s(J_{z_0}^pI^q,z_0)=q\rho_s(I,z_0)+p$, where $J_{z_0}=(R_{2\pi t})_{t\in[0,1]}$ is the local isotopy of the identity and each $R_{2\pi t}$ is the counter-clockwise rotation through an angle $2\pi t$ about the center $z_0$;
\item[ii)] if $z_0$ is accumulated by contractible fixed points of $f$ associated to $I$, then $0\in \rho_s(I,z_0)$;
\item[iii)]if $f$ satisfies the local intersection condition and if $0$ is an interior point of the convex hull of $\rho_s(I,z_0)$, then $z_0$ is accumulated by contractible fixed points of $f$ associated to $I$;
\item[iv)] if $I$ has a positive (resp. negative) rotation type, then $\rho_s(I,z_0)\subset[0, +\infty]$ (resp. $\rho_s(I,z_0)\subset [-\infty, 0]$);
\item[v)] if $\rho_s(I,z_0)$ is a non-empty set that is contained in $(0,+\infty]$ (resp. $[-\infty, 0)$), then $I$ has a positive (resp.negative) rotation type;
\item[vi)] if $\rho_s(I,z_0)$ is  a non-empty set that is contained in $[0,\infty]$ (resp. $[-\infty, 0]$)  and is not reduced to  $0$, and  if $z_0$ is not accumulated by contractible fixed points of $f$ associated to $I$, then $I$ has a  positive (resp. negative) rotation type;
\item[vii)] if $f$ can be blown up at $z_0$, and if $\rho_s(I,z_0)$  is not empty, then  $\rho_s(I,z_0)$ is  reduced to the blow-up rotation number $\rho(I,z_0)$.
\end{itemize}
\end{proposition}

\begin{remark}\label{R: pre-rotation set like interval}
When $f$ satisfies the local intersection condition, one can deduce that $\rho_s(I,z_0)$ is a closed interval ($\subset [-\infty,+\infty]$) as a corollary of the assertions i), ii), iii) of the  proposition; if we add the assumption that $z_0$ is an isolated fixed point, then  $\rho_s(I,z_0)$ is a closed subinterval of $ [k,k+1]$ for an integer $k$ by the assertions i) and iii) of the proposition.
\end{remark}

\section{The local rotation type}\label{S: local rotation type}
\subsection{The rotation type at an isolated fixed point}\label{S: rotation type at an isolated fixed point}

Let $f: (W,z_0)\rightarrow (W',z_0)$ be an orientation and area  preserving local homeomorphism with an isolated fixed point $z_0$. In this section, we will detect the local rotation type of local isotopies of $f$ and prove Proposition \ref{P: local rotation type}.

\begin{proof}[Proof of Proposition \ref{P: local rotation type}]
Let $I$ be a local isotopy of $f$, and $\mathcal{F}$ a locally transverse foliation of $I$.
We will prove the proposition in two cases.
 \begin{itemize}
 \item[i)]
Suppose that $i(f,z_0)\ne 1$.

By the first part of case i) of Proposition \ref{P: pre-index and rotation type}, there exists a  local isotopies $I_0$ of $f$ such that for every locally transverse foliation $\mathcal{F}_0$ of $I_0$, $i(\mathcal{F}_0,z_0)=i(f,z_0)\ne 1$, and hence $z_0$ is a saddle of $\mathcal{F}_0$ (see Proposition \ref{P: pre-dynamics of a foliation} and Remark \ref{R: pre-local dynamics of foliation-area preserving}). By the second part of case i) of  Proposition \ref{P: pre-index and rotation type},
\begin{itemize}
 \item[-] if $I\sim I_0$, then  $i(\mathcal{F},z_0)=i(\mathcal{F}_0,z_0)\ne 1$, and hence $z_0$ is a saddle of $\mathcal{F}$;
 \item[-] if $I>I_0$, then $z_0$ is a sink of $\mathcal{F}$;
 \item[-] if $I<I_0$, then $z_0$ is a source of $\mathcal{F}$.
 \end{itemize}
 Therefore,  for a local isotopy $I$  at $0$, it has  a zero rotation type if it is in the homotopy class of $I_0$; it has  a positive rotation type if $I>I_0$; and it has  a negative rotation type if $I<I_0$. Both statements of Proposition \ref{P: local rotation type} are proven.

\item[ii)]
Suppose that $i(f,z_0)=1$.

By the case ii) of  Proposition \ref{P: pre-index and rotation type}, we have  $i(\mathcal{F},z_0)=1$. By Proposition \ref{P: pre-dynamics of a foliation} and Remark \ref{R: pre-local dynamics of foliation-area preserving}, $z_0$ is a sink or a source of $\mathcal{F}$. For any locally transverse  foliation $\mathcal{F'}$ of $I$, Matsumoto \cite{Matsumoto} proved in fact that $z_0$ is a sink (resp. source) of $\mathcal{F}'$ if it is a sink (resp. source) of $\mathcal{F}$.  So, a local isotopy $I$ of $f$ has either a positive or a negative rotation type, and has exactly one of the two rotation types.  We have proven the first statement of Proposition \ref{P: local rotation type}.

 Since $f$ is area preserving, $\rho_s(I,0)$ is not empty.  Moreover, since $z_0$ is an isolated fixed point, we know by Remark \ref{R: pre-rotation set like interval} that there exists $k\in \mathbb{Z}$ such that $\rho_s(I,0)\subset [k,k+1]$. By the assertion i) of Proposition \ref{P: pre-rotation set},  there exists a local isotopy $I_0$ of $f$  such that $\rho_s(I_0,0)$ is a nonempty subset of $[0,1]$ and is not reduced to $1$. Then, as a corollary of the assertions i), v) and vi) of Proposition \ref{P: pre-rotation set}, we know that
\begin{itemize}
\item[-]$I$ has a positive rotation type if $I> I_0$,
\item[-]$I$ has a negative rotation type if $I< I_0$.
\end{itemize}
Therefore, $I_0$ is torsion-low at $z_0$.
Recall that  $I_0$ has either a positive or a negative rotation type. We have proven the second statement of Proposition \ref{P: local rotation type}.\qedhere
\end{itemize}
\end{proof}

\subsection{The local rotation type at a non-isolated fixed point}

Let $f: (W,z_0)\rightarrow (W',z_0)$ be an orientation and area  preserving local homeomorphism with a non-isolated fixed point $z_0$. We will prove Proposition \ref{P: local rotation type non-isolated} as a corollary of Proposition \ref{P: pre-rotation set}.

\begin{proof}[Proof of Proposition \ref{P: local rotation type non-isolated}]
If $I$ has a positive  rotation type, there exists a foliation that is locally transverse to $I$ such that $z_0$ is a sink of $\mathcal{F}$.  For any  fixed  point $z$ of $f$ in a sufficient small neighborhood of $z_0$, the trajectory of $z$ along $I$ is homotopic to a path that is locally transverse  to $\mathcal{F}$, and hence is not contractible in $W\setminus\{z_0\}$. So, $z_0$ is not accumulated by contractible fixed points of $f$ associated to $I$. Moreover, since $z_0$ is accumulated by fixed points of $f$, there exists a sequence $\{z_n\}_{n}$ of fixed points of $f$ that tends to $z_0$. For each $n$, the trajectory of $z_n$ along $I$ is homotopic to a path that is locally transverse  to $\mathcal{F}$, and hence the linking number $L(I,z_n,z_0)$,   that is the index of the trajectory of $z_n$ along $I$ relatively to $z_0$, is positive. By the definition of the local rotation set,    $\rho_s(I,z_0)$  contains the limit points of $\{L(I,z_n, z_0): n\in\mathbb{N}\}$ as a subset of $[-\infty,+\infty]$, so $\rho_s(I,z_0)$ contains a positive integer or $+\infty$. By iv) of Proposition \ref{P: pre-rotation set}, $\rho_s(I,z_0)\subset [0,+\infty]$. So, $\rho_s(I,z_0)$ is a subset of $[0,+\infty]$ that contains a positive integer or $+\infty$.

Similarly, if $I$ has a negative  rotation type, then $z_0$ is not accumulated by contractible fixed points of $f$ associated to $I$, and  $\rho_s(I,z_0)$ is a subset of $[-\infty, 0]$ that contains a negative integer or $-\infty$. So, $I$ can not have both a positive and a negative rotation type.

We will finish the proof of the first assert of the proposition by the following argument. If $z_0$ is not accumulated by contractible fixed points of $f$ associated to $I$, there is a foliation $\mathcal{F}$ that is locally transverse to $I$. By Remark \ref{R: pre-local dynamics of foliation-area preserving}, $z_0$ is a sink, or a source, or a saddle of $\mathcal {F}$. The case $z_0$ is a saddle of $\mathcal{F}$ cannot occur, because in this case, there is no fixed point of $f$ in sufficiently small neighborhood of $z_0$. Now, we know that $z_0$ is a sink, or a source of $\mathcal {F}$, and hence $I$ has a positive or a negative rotation type.

If $I$ has a positive rotation type, $\rho_s(I,z_0)$ is a subset of $[0,+\infty]$ that contains a positive integer or $+\infty$. For any local isotopy $I'>I$, by i) of Proposition \ref{P: pre-rotation set},  there exists a positive integer $k$ such that $\rho_s(I',z_0)\subset [k,+\infty]$ is a non-empty set. By v) of Proposition \ref{P: pre-rotation set}, $I'$ has a positive rotation type.

Similarly, we can prove  if $I$ has a negative rotation type, every local isotopy $I'<I$
has a negative rotation type.

If  $z_0$ is accumulated by contractible fixed points of $f$ associated to $I$, by assertion ii) of Proposition \ref{P: pre-rotation set}, $0\in \rho_s(I,z_0)$. For any local isotopy $I'>I$, by i) of  Proposition \ref{P: pre-rotation set},  $\rho_s(I',z_0)$ contains a positive integer. So, either $z_0$ is accumulated by contractible fixed points of $f$ associated to $I'$, or by iii) and vi) of  Proposition \ref{P: pre-rotation set}, $I'$ has a positive rotation type. Similarly, for any local isotopy $I'<I$, either situation i) or iii) of Proposition \ref{P: local rotation type non-isolated} occurs.
\end{proof}

\section{The properties of a  torsion-low local isotopy}\label{S: property of towsion low isotopy}
\subsection{The local rotation set of a torsion-low local isotopy }

Let $f: (W,z_0)\rightarrow (W',z_0)$ be an orientation and area  preserving local homeomorphism and $I$ a torsion-low local isotopy of $f$.
In this section, we will study the local rotation set of a torsion-low isotopy and prove Proposition \ref{P: rotation set of a  torsion-low local isotopy}.
\begin{proof}[Proof of Proposition \ref{P: rotation set of a  torsion-low local isotopy}]
The first two statements are just a corollary of the definition of the torsion-low property and the assertions i), ii), iv) of Proposition \ref{P: pre-rotation set}.

Suppose now that $f$ can be blown up at $z_0$. Since $f$ is area preserving, $\rho_s(I,z_0)$ is not empty.  So, using the assertion vii) of Proposition \ref{P: pre-rotation set}, one deduces that $\rho_s(I,z_0)$ is reduced to a single point in $[-1,1]$.

Suppose now that $f$ is a diffeomorphism in a neighborhood of $z_0$. The first part of the fourth statement is just a special case of the third statement.

 To conclude, let us prove the last part of the fourth statement. To simplify the notations, we suppose that $z_0=0\in\mathbb{R}^2$. Since $f$ is area preserving,
  $Df(0)$ can not have two real eigenvalues such that the absolute values of both eigenvalues are strictly smaller (resp. larger) than $1$.  Since $1$ is not an eigenvalue of $Df(0)$, one has to consider the following three cases:
\begin{itemize}
\item[-] $Df(0)$ do not have any real eigenvalue. In this case, $\rho(I,0)$ is not an integer.
\item[-]  $Df(0)$ has two real eigenvalues $\lambda_1$ and $\lambda_2$ such that $\lambda_1<-1<\lambda_2<0$. In this case, $\rho(I,0)$ is equal to $\frac{1}{2}$ or $-\frac{1}{2}$.
\item[-]  $Df(0)$ has two real eigenvalues $\lambda_1$ and $\lambda_2$ such that $0<\lambda_1<1<\lambda_2$. In this case, $i(f,0)=-1$, and $I$ has a zero rotation type at $0$. By Proposition \ref{P: pre-blow-up},   $\rho(I,0)$ is equal to $0$.
\end{itemize}
In any case, we obtain  that $\rho(I,0)$ belongs to $(-1,1)$.
\end{proof}

\subsection{Local dynamics  of the locally transverse foliation of a torsion-low local isotopy}\label{S: Local dynamics  of the locally transverse foliation of a torsion-low local isotopy}

Let $f: (W,z_0)\rightarrow (W',z_0)$ be an orientation and area  preserving local homeomorphism, $I$ a torsion-low local isotopy, and $\mathcal{F}$ a locally transverse foliation of $I$. Here, $z_0$ could be an isolated or a non-isolated fixed point of $f$, but it is an isolated singularity of $\mathcal{F}$ by definition of locally transverse foliation.  In this section, we will give a discription of the local dynamics of $\mathcal{F}$. Formally, we will prove the following result, which immediately implies Proposition \ref{P: local dynamic of the transverse foliation of a torsion-low isotopy}.

\begin{proposition}\label{P: local dynamics of transverse foliation-torsion low 2}
Under the previous assumptions,
\begin{itemize}
\item[-] If $z_0$ is an isolated fixed point of $f$ such that $i(f,z_0)\ne 1$, then $z_0$ is a saddle of $\mathcal{F}$ and $i(\mathcal{F},z_0)=i(f,z_0)$.
\item[-]  If $z_0$ is an isolated fixed point such that $i(f,z_0)=1$ or if $z_0$ is not isolated in $\mathrm{Fix}(f)$, then $z_0$ is a sink or a source of $\mathcal{F}$.
\end{itemize}
 \end{proposition}

\begin{proof}
One has to consider two cases: $z_0$ is an isolated fixed point of $f$ or not.
\begin{itemize}
\item[i)] Suppose that $z_0$ is an isolated fixed point of $f$. As a corollary of Proposition \ref{P: local rotation type},
\begin{itemize}
\item[-] $z_0$ is a saddle of $\mathcal{F}$ if $i(f,z_0)\ne 1$;
\item[-] $z_0$ is a sink or a source of $\mathcal{F}$ if $i(f,z_0)=1$.
\end{itemize}
Moreover, in the first case, as in the first part of the proof of Proposition \ref{P: local rotation type}, we have $i(\mathcal{F},z_0)=i(f,z_0)$.

\item[ii)] Suppose that $z_0$ is a non-isolated fixed point of $f$.  By Proposition \ref{P: local rotation type non-isolated}, $I$ has a positive or negative rotation type, so $z_0$ is a sink or a source of $\mathcal{F}$.
\qedhere
\end{itemize}
\end{proof}

\section{The existence of a torsion-low maximal isotopy}\label{S: existence of a  torsion-low maximal isotopy}

Let $f$ be an area preserving homeomorphism of  a connected oriented surface $M$ that is isotopic to the identity.  The main aim of this section is to prove the existence of a torsion-low maximal isotopy of $f$, i.e. Theorem \ref{T: main}.

 When $\mathrm{Fix}(f)=\emptyset$, the theorem is trivial. So we suppose that $\mathrm{Fix}(f)\ne \emptyset$ in the following part of this section. Recall that $\mathcal{I}$ is the set of couples $(X, I)$ that consists of a closed unlinked subset $X\subset \mathrm{Fix}(f)$ and  an  isotopy $I$  from the identity to  $f$  relatively to $X$. We denote by $\mathcal{I}_0$  the set of $(X,I)\in\mathcal{I}$  such that $I$ is torsion-low at every $z\in X$.  Recall that $\precsim$ is the preorder defined in Section \ref{S: pre-Jaulent's preorder}. Then, Theorem \ref{T: main} is just an immediate corollary of the following theorem.

\begin{theorem}\label{T: existence of globle torsion-low isotopy}
Given $(X,I)\in\mathcal{I}_0$, there exists a maximal extension $(X', I')$ of $(X,I)$ that belongs to $\mathcal{I}_0$.
\end{theorem}

\begin{remark}\label{R: globle torsion-low 1}
 We will see that for every $z\in X$, $I'$ and $I$ are locally  homotopic   as  local isotopies at $z$, except in the case that $M$ is a sphere and $X$ is reduced to a point. In the case  that  $M$ is a sphere and $X$ is reduced to one point, this is not necessary the case (see Example \ref{Ex: local torsion low not imply globle}).
\end{remark}

\begin{remark}\label{R: globle torsion-low 2}
One may fail to find a torsion-low maximal  isotopy $I$ such that  every $z\in \mathrm{Fix}(I)$ that is not isolated in $\mathrm{Fix}(f)$ is also not isolated in $\mathrm{Fix}(I)$. We will give an example (Example \ref{Ex: zero not always in rotation set}) in Section \ref{S: examples}. In particular, in this example, for every torsion-low maximal  isotopy, there is a point that is isolated in $\mathrm{Fix}(I)$ but is not isolated in $\mathrm{Fix}(f)$.
\end{remark}

\begin{remark}\label{R: not torsion low means isolated}
We also note the following fact which results immediately from the definition of torsion-low property and Proposition \ref{P: local rotation type non-isolated}:\\
If $(Y,I)\in\mathcal{I}$ and $z\in Y$ is a point such that $I$ is not torsion-low at $z$,  then $z$ is isolated in $Y$.
\end{remark}

Now, we begin the proof of  Theorem \ref{T: existence of globle torsion-low isotopy}. It  is a corollary of Zorn's lemma and  the following  Propositions \ref{P: mainproof-existence of an upper bound}-\ref{P: mainproof-sphere}.  We will first deduce the theorem from the four propositions, then we will prove the  propositions one by one.

\begin{proposition}\label{P: mainproof-existence of an upper bound}
If $\{(X_{\alpha}, I_{\alpha})\}_{\alpha\in J}$ is a totally ordered chain in $\mathcal{I}_0$, then there exists an upper bound  $(X_{\infty},I_{\infty})\in\mathcal{I}_0$ of this chain, where $X_{\infty}=\overline{\cup_{\alpha\in J}X_{\alpha}}$
\end{proposition}

\begin{proposition}\label{P: mainproof-generalcase}
Consider a maximal $(Y, I)\in\mathcal{I}$ and $z\in Y$  such that $I$ is not torsion-low at $z$.
When the underlying surface $M$ is a sphere, we suppose that $Y\setminus \{z\}$ contains at least two points; and when $M$ is a plane, we suppose that $Y\setminus\{z\}$ is not empty. Then, there always exists a maximal extension $(Y', I')\in\mathcal{I}$ of $(Y\setminus\{z\},I)$ and $z'\in Y'\setminus (Y\setminus\{z\})$ such that $I'$ is torsion-low at $z'$.
\end{proposition}

\begin{proposition}\label{P: mainproof-plane}
When $M$ is a plane and $f$ admits a fixed point,  $(X,I)\in\mathcal{I}_0$ is not maximal in $(\mathcal{I}_0,\precsim)$ if $X=\emptyset$.
\end{proposition}

\begin{proposition}\label{P: mainproof-sphere}
When $M$ is a sphere, $(X,I)\in\mathcal{I}_0$ is not maximal in $(\mathcal{I}_0,\precsim)$ if $\#X\le 1$.
\end{proposition}

\begin{remark}
Proposition \ref{P: mainproof-plane} and \ref{P: mainproof-sphere} deal with two special cases. The first one is easy, while the second one is more difficult. Indeed, to find  an identity isotopy on a plane that is torsion-low at one point, we do not need to study  the dynamics at infinity; but to find  an identity isotopy on a sphere that is torsion-low at two points, we need to check the properties of the isotopy near both points.
\end{remark}

 \begin{proof}[Proof of Theorem \ref{T: existence of globle torsion-low isotopy}]

   Fix $(X,I)\in\mathcal{I}_0$. Let $\mathcal{I}_*$ be the set of equivalent classes of the extensions  $(X',I')\in\mathcal{I}_0$ of $(X,I)$. Then, the preorder $\precsim$ induces a partial order over $\mathcal{I}_*$. To simplify the notations, we still denote  by $\precsim$ this partial order.  By Proposition \ref{P: mainproof-existence of an upper bound}, $(\mathcal{I}_*,\precsim)$ is a partial ordered set satisfying the condition of Zorn's lemma, so  $(\mathcal{I}_*,\precsim)$ contains at least one maximal element. Choose one representative $(X',I')$ of a maximal element of $(\mathcal{I}_*,\precsim)$. It is an extension of $(X,I)$ and is maximal in $(\mathcal{I}_0,\precsim)$.

  Using Propositions \ref{P: mainproof-generalcase}-\ref{P: mainproof-sphere}, we will prove by contradiction that a maximal couple $(X',I')\in(\mathcal{I}_0,\precsim)$  is also maximal in $(\mathcal{I},\precsim)$. Suppose that there exists a couple $(X',I')\in \mathcal{I}_0$ that is maximal in $(\mathcal{I}_0,\precsim)$ but is not maximal in $(\mathcal{I},\precsim)$.  Fix a maximal extension $(Y,I'')$  of $(X',I')$ in $(\mathcal{I},\precsim)$, and $z\in Y\setminus X'$. Then,  $I''$ is not torsion-low at $z$, and so $z$ is isolated in $Y$ (see Remark \ref{R: not torsion low means isolated}).  Write $Y_0=Y\setminus\{z\}$.  When $M$ is a sphere, by Proposition \ref{P: mainproof-sphere}, $Y_0\supset X'$ contains at least $2$ points; when $M$ is a plane, by Proposition \ref{P: mainproof-plane}, $Y_0\supset X'$ is not empty. By Proposition \ref{P: mainproof-generalcase}, there exist a maximal extension $(Y',I''')$ of $(Y_0,I'')$ and $z'\in Y'$, such that $I'''$ is torsion-low at $z'$.  Note that $(Y', I''')$ is also an extension of $(X',I')$.
  Moreover, for every $z''\in X'$, $I'$ and $I'''$ are equivalent as local isotopies at $z''$. So, $(X'\cup\{z'\},I''')\in\mathcal{I}_0$ is an extension of $(X',I')$, which contradicts with the maximality of $(X',I')$ in $(\mathcal{I}_0,\precsim)$.
\end{proof}

\begin{proof}[Proof of Proposition \ref{P: mainproof-existence of an upper bound}]
By Proposition \ref{P: pre-exist uper bound of  isotopies sequence}, we know that there exists an upper bound $(X_{\infty},I_{\infty})\in\mathcal{I}$ of the chain, where $X_{\infty}=\overline{\cup_{\alpha\in J}X_{\alpha}}$. We only need to prove that  $(X_{\infty},I_{\infty})\in\mathcal{I}_0$.

Without loss of generality, we suppose that the chain  $\{(X_{\alpha}, I_{\alpha})\}_{\alpha\in J}$ is strictly increasing.   When $J$ is finite, the result is obvious. We suppose that $J$ is infinite.  Consider a point $z\in X_{\infty}$. Either $z$ is a limit point of $X_{\infty}$, or  there exists $\alpha_0\in J$ such that  $z$ is an isolated point of $ X_{\alpha}$   for  all $\alpha\in J$ satisfying $(X_{\alpha_0}, I_{\alpha_0})\precsim(X_\alpha, I_{\alpha})$. In the first case, $I_{\infty}$ is  torsion-low at $z$ (Remark \ref{R: ac means torsion-low}); in the second case, by choosing suitable $(X_{\alpha},I_{\alpha})$, we can suppose that $\# X_{\alpha}\ge 2$, then  $I_{\infty}$ and $I_{\alpha}$ are equivalent as local isotopies at $z$. In both case, $I_{\infty}$ is  torsion-low at $z$.
\end{proof}

Before proving Proposition \ref{P: mainproof-generalcase}, we give a sketch of our proof. For a maximal $(Y,I)\in\mathcal{I}$ and $z\in Y$  such that $I$ is not torsion-low at $z$ and that  the  assumptions of Proposition \ref{P: mainproof-generalcase} are satisfied, we will find suitable maximal extension $(Y',I')$ of $(Y\setminus\{z\}, I)$. Either $(Y',I')$ is torsion-low at $z'\in Y'\setminus Y$, or $I'$ fixes more points that $I$ (Lemma \ref{L: mainproof-general case-1} and Lemma \ref{L: mainproof-plane with boundary more that 2 points}).  By induction, either there exists a maximal extension $(Y',I')$ of $(Y\setminus\{z\}, I)$ such that $I'$ is torsion-low at $z'\in Y'\setminus Y$, or there exists a  maximal extension $(Y',I')$ of $(Y\setminus\{z\}, I)$  such that $Y'\setminus Y$ contains infinitely many points (Lemma \ref{L: mainproof-general case-2}). In the second case, we  will prove that $Y'$ has a limit point $z'\in Y'\setminus Y$, and hence $I'$ is torsion-low at $z'$.

Now, we will prove Lemma \ref{L: mainproof-general case-1}, \ref{L: mainproof-plane with boundary more that 2 points}  and \ref{L: mainproof-general case-2} one by one, and then prove Proposition \ref{P: mainproof-generalcase}. We will use Lemma \ref{L: mainproof-general case-1} and \ref{L: mainproof-plane with boundary more that 2 points} when  proving Lemma \ref{L: mainproof-general case-2}, and  we will use Lemma  \ref{L: mainproof-general case-2} when  proving Proposition \ref{P: mainproof-generalcase}.

\begin{lemma}\label{L: mainproof-general case-1}
Let us suppose that  $(Y, I)$ is maximal in $(\mathcal{I},\precsim)$, that $I$ is not torsion-low at $z\in Y$, and  that the connected component of $M\setminus(Y\setminus\{z\})$ containing $z$ is neither homeomorphic to a sphere nor to a plane.  If for every  maximal extension $(Y', I')$ of $(Y\setminus\{z\},I)$ and every point $z'\in Y'\setminus (Y\setminus\{z\})$, $I'$ is not torsion-low at $z'$, then there exists a maximal extension  $(Y', I')\in \mathcal{I}$ of $(Y\setminus \{z\},I)$ such that $\# (Y'\setminus (Y\setminus\{z\}))>1$.
\end{lemma}

We will first give the idea of the proof of Lemma \ref{L: mainproof-general case-1}. Under the assumptions, we prove that there exists a fixed point $z_1$ of $f$ that is not fixed along $I$, and the trajectory of $z_1$ along $I$  is contractible in $M\setminus (Y\setminus \{z\})$. So, there exist isotopies that fix $Y\setminus\{z\}$ and $z_1$. We choose suitable $z_1$, and consider a maximal extension $(Y_1,I_1)$ of $(Y\setminus\{z\},I)$. We will prove either $Y_1\setminus Y$ contains at least two points, or $I_1$ is torsion-low at $z_1$.

\begin{proof}[Proof of Lemma \ref{L: mainproof-general case-1}]
 Fix a couple $(Y,I)$ maximal in $(\mathcal{I},\precsim)$ and $z_0\in Y$ satisfying the assumptions of this lemma. By Remark \ref{R: not torsion low means isolated}, $z_0$ is an isolated point of $Y$. Write  $Y_0=Y\setminus\{z_0\}$. Then $Y_0$ is a closed subset of $M$. Denote by $M_{Y_0}$ the connected component of $M\setminus Y_0$ containing $z_0$. By assumption, $M_{Y_0}$ is neither homeomorphic to a sphere nor to a plane.  Due to the definition of the torsion-low property and Proposition \ref{P: local rotation type non-isolated}, one has to consider the following four cases:
\begin{itemize}
\item[i)]$z_0$ is an isolated fixed point of $f$ and there exists a local isotopy $I'_{z_0}> I$ at $z_0$ which does not have a positive rotation type;
\item[ii)] $z_0$ is not an isolated fixed point of $f$ and there exists a local isotopy $I'_{z_0}> I$ at $z_0$ which has a negative rotation type;
\item[iii)]$z_0$ is an isolated fixed point of $f$ and there exists a local isotopy $I'_{z_0}<I$ at $z_0$ which does not have a negative rotation type;
\item[iv)] $z_0$ is  not an isolated fixed point of $f$ and  there exists a local isotopy $I'_{z_0}< I$ at $z_0$ which has a positive rotation type.
\end{itemize}
We will study the first two cases,  the other two can be treated in a similar way.

Let $\mathcal{F}_Y$ be a transverse foliation of $I$. In the case i), by Proposition \ref{P: local rotation type}, there exists a local isotopy $I_0$ at $z_0$ that is  torsion-low at $z_0$, and we know that $I<I'_{z_0}\lesssim I_0$,  so  $I$ has a negative rotation type at $z_0$; in the case ii), by Proposition \ref{P: local rotation type non-isolated}, we know that $I$ has a negative rotation type at $z_0$. In any case, $z_0$ is a source of $\mathcal {F}_Y$. We denote by $W$ the repelling basin of $z_0$ for $\mathcal{F}_Y$.

 Let $\pi_{Y_0}:\widetilde{M}_{Y_0}\rightarrow M_{Y_0}$ be the universal cover,  $\widetilde{I}=(\widetilde{f}_t)_{t\in[0,1]}$ be the identity isotopy that lifts  $I|_{M_{Y_0}}$, $\widetilde{f}=\widetilde{f}_1$ be the induced lift of $f|_{M_{Y_0}}$,  and $\widetilde{\mathcal{F}}$ be the lift of $\mathcal{F}_Y|_{M_{Y_0}}$.  Then, $\widetilde{I}$ fixes every point in $\pi_{Y_0}^{-1}\{z_0\}$, and every point in $\pi_{Y_0}^{-1}\{z_0\}$ is a source of $\widetilde{\mathcal{F}}$. We fix one element  $\widetilde{z}_0$ in $\pi_{Y_0}^{-1}\{z_0\}$, and denote by $\widetilde{W}$ the repelling basin of $\widetilde{z}_0$ for $\widetilde{\mathcal{F}}$.  Let $J_{\widetilde{z}_0}$ be an identity isotopy of the identity map of $\widetilde{M}_{Y_0}$ that fixes $\widetilde{z}_0$ and satisfies $\rho_s(J_{\widetilde{z}_0},\widetilde{z}_0)=\{1\}$. Let $\widetilde{I}^*$ be a maximal extension of $(\{\widetilde{z}_0\}, J_{\widetilde{z}_0}\widetilde{I})$, and $\widetilde{\mathcal{F}}^*$ be a transverse foliation of $\widetilde{I}^*$.

 Because $M_{Y_0}$ is neither homeomorphic to a sphere nor to a plane,  $\pi^{-1}_{Y_0}\{z_0\}$ is not reduced to one point, and $\widetilde{W}$ is a proper subset of $\widetilde{M}_{Y_0}$. So, there exists a proper\footnote{A leaf from $\infty$ to $\infty$.} leaf  of $\widetilde{\mathcal{F}}$.  Consequently, if we consider the end $\infty$ as a singularity, $\widetilde{f}$ can be blown up at $\infty$ by Proposition \ref{P: pre-blow-up}.
Note also that $\infty$ is accumulated by the points of $\pi_{Y_0}^{-1}\{z_0\}$, so $0$ belongs to $\rho_s(\widetilde{I},\infty)$.
Therefore, $\rho_s(\widetilde{I},\infty)$ is reduced to $0$ by the assertion vii) of Proposition \ref{P: pre-rotation set}, and $\rho_s(\widetilde{I}^*,\infty)$ is reduced to $-1$ by the first assertion of Proposition \ref{P: pre-rotation set}.

We can assert that $\widetilde{I}^*$ has   finitely many fixed points. We will prove it by contradiction.  Suppose that $\widetilde{I}^*$ fixes infinitely many  points.  Because $\rho_s(\widetilde{I}^*,\infty)$ is reduced to $-1$,  $\infty$ is not accumulated by fixed points of $\widetilde{I}^*$. Since $\widetilde{I}$ fixes each point in $\pi_{Y_0}^{-1}\{z_0\}$, $\widetilde{I}^*$ does not fix any point in $\pi_{Y_0}^{-1}\{z_0\}\setminus\{\widetilde{z}_0\}$. We know also that $\widetilde{z}_0$ is isolated in $\mathrm{Fix}(\widetilde{I}^*)$, because in case i) $z_0$ is isolated in $\mathrm{Fix}(f)$, and  in case ii) $\widetilde{I}^*$ has a negative rotation type by Proposition \ref{P: local rotation type non-isolated}.
 Therefore,  there exists a non-isolated  point $\widetilde{z}'$ in $\mathrm{Fix}(\widetilde{I}^*)$ such that $z'=\pi_{Y_0}(\widetilde{z}')\ne z_0$. Moreover, $z'$ is  a non-isolated fixed point of $f$. By Proposition \ref{P: pre-maximal implies no contractible fixed point}, there exists an extension $(Y',I')$ of $(Y_0, I)$  that fixes $z'$.  Let $\widetilde{I}'$ be the identity isotopy that  lifts  $I'|_{M_{Y_0}}$. Since $\pi_{Y_0}^{-1}(z')$ is included in $\mathrm{Fix}(\widetilde{I}')$, we have $\rho_s(\widetilde{I}',\infty)=0$. Therefore, $\widetilde{I}'$ and $ J^{-1}_{\widetilde{z}'} \widetilde{I}^*$ are equivalent as local isotopies at $\infty$, where $J_{\widetilde{z}'}$ is an identity isotopy of the identity map of $\widetilde{M}_{Y_0}$ that fixes $\widetilde{z}'$ and satisfies $\rho_s(J_{\widetilde{z}'},\widetilde{z}')=\{1\}$. Recall that $\pi_1(\mathrm{homeo}_0(\mathbb{R}^2,0))\cong\mathbb{Z}$ (see \cite{McCarty} or \cite{Hanmstrom-Homeotopygroups}), so $\widetilde{I}'$ and $ J^{-1}_{\widetilde{z}'} \widetilde{I}^*$ are also equivalent as local isotopies at  $\widetilde{z}'$. Recall Remark \ref{R: ac means torsion-low}, we get that $I'$ is torsion-low at $z'$, which contradicts with the assumption of this lemma.

Since $\rho_s(\widetilde{I}^*,\infty)$ is reduced to $-1$, the assertion v) of Proposition \ref{P: pre-rotation set} tells us that  $\infty$ is a source of $\widetilde{\mathcal{F}}^*$.

 We can  assert that $\widetilde{z}_0$ is not a sink of $\widetilde{\mathcal{F}}^*$. We will prove it in two cases. Recall that as  local isotopies at $\widetilde{z_0}$,  $\widetilde{I}^*$ is equivalent to $J_{\widetilde{z_0}}\widetilde{I}$.
 In  case i), as local isotopies at $\widetilde{z_0}$, both $\widetilde{I}^*$ and $J_{\widetilde{z_0}}\widetilde{I}$ are conjugate to a local isotopy at $z_0$ that is $\lesssim I'_{z_0}$, and  $I'_{z_0}$ does not have a positive rotation type, so $\widetilde{I}^*$ does not have a positive rotation type;
 in  case ii),  as local isotopies at $\widetilde{z_0}$, both $\widetilde{I}^*$ and $J_{\widetilde{z_0}}\widetilde{I}$ are conjugate to a local isotopy at $z_0$ that is $\lesssim I'_{z_0}$, and  $I'_{z_0}$ has a negative rotation type, so  $\widetilde{I}^*$ has a negative rotation type.

In $\widetilde{M}_{Y_0}\sqcup\{\infty\}$, there does not exist any closed leaf or oriented simple closed curve that consists of leaves and singularities of $\widetilde{\mathcal{F}}^*$ with the orientation inherited from the orientation of leaves. We can prove this assertion by contradiction. Let $\Gamma$ be  such a curve. Since $\infty$ is a source of $\widetilde{\mathcal{F}}^*$, it does not belong to $\Gamma$.  Let $U$ be the bounded component of $\widetilde{M}_{Y_0}\setminus \Gamma$, then $U$ contains the positive or the negative orbit of a wandering open set in $U\setminus\widetilde{f}(U)$ or $U\setminus\widetilde{f}^{-1}(U)$ respectively. This contradicts the area preserving property of $\widetilde{f}$.

Then, we can give a partial order $<$ over the set of singularities of $\widetilde{\mathcal{F}}^*$ such that $\widetilde{z}<\widetilde{z}'$ if there exists a leaf or a connection of leaves and singularities with the orientation inherited from the orientation of leaves from $\widetilde{z}'$ to $\widetilde{z}$. Since $\widetilde{\mathcal{F}}^*$ has only finitely many singularities, there exists a minimal singularity $\widetilde{z}_1$. Moreover, $\widetilde{\mathcal{F}}^*$ does not have any closed leaf  or a leaf from $\widetilde{z}_1$, and hence $\widetilde{z}_1$ is a sink of $\widetilde{\mathcal{F}}^*$.  Therefore, $\widetilde{f}$ fixes $\widetilde{z}_1$ and hence there exists a maximal extension $(Y_1, I_1)$ of $(Y_0, I)$ such that $Y_0\cup\{z_1\}\subset Y_1$, where $z_1=\pi_{Y_0}(\widetilde{z}_1)$.

Now, we will prove by contradiction that $Y_1\setminus  Y_0$ contains at least two points.  Suppose that  $Y_1= Y_0\sqcup\{z_1\}$.  Let $\mathcal{F}_{Y_1}$ be a transverse foliation of $I_1$, $\widetilde{I}_1$ be the identity isotopy that lifts $I_1|_{M_{Y_0}}$, and $\widetilde{\mathcal{F}}_1$ be the lift of $\mathcal{F}_{Y_1}|_{M_{Y_0}}$ to $\widetilde{M}_{Y_0}$. We know that $(Y_0,I)\sim(Y_0,I_1)$, so the lift of $f|_{M_{Y_0}}$ to $\widetilde{M}_{Y_0}$ associated to $I_1|_{M_{Y_0}}$ is also $\widetilde{f}$.  The set of singularities of $\widetilde{\mathcal{F}}_1$ is $\pi_{Y_0}^{-1}\{z_1\}$, and $\widetilde{z}_1$ is  an isolated singularity of $\widetilde{\mathcal{F}}_1$, so it is a sink, or a source, or a saddle of $\widetilde{\mathcal{F}_1}$ by  Remark \ref{R: pre-local dynamics of foliation-area preserving}.  We know that $\rho_s(\widetilde{I}^*,\infty)$ is reduced to $-1$ and  that $\rho_s(\widetilde{I}_1,\infty)$ is reduced to $0$, so $\widetilde{I}^*$ and $ J_{\widetilde{z}_1}\widetilde{I}_1$ are equivalent as  local isotopies at $\infty$, and are equivalent as local isotopies at $\widetilde{z}_1$. By the assumption, $I_1$ is not torsion-low at $z_1$, so $\widetilde{z}_1$ is a sink of $\widetilde{\mathcal{F}}_1$, and $z_1$ is a sink of $\mathcal{F}_{Y_1}$. Let $\widetilde{W}_1$ be the attracting basin of $\widetilde{z}_1$ for $\widetilde{\mathcal{F}}_1$. A leaf in $\partial\widetilde{W}_1$ is a proper leaf. For every fixed point $\widetilde{z}$ of $\widetilde{f}$, there exists a loop $\delta$ that is  homotopic to  its trajectory along $\widetilde{I}_1$ in $\widetilde{M}_{Y_0}\setminus\pi^{-1}_{Y_0}\{z_1\}$ (so in $\widetilde{M}_{Y_0}\setminus \{\widetilde{z_1}\}$) and is transverse to $\widetilde{\mathcal{F}}_1$. The linking number $L(\widetilde{I}_1,\widetilde{z},\widetilde{z}_1)$ is the index of the trajectory of $\widetilde{z}$ along $\widetilde{I}_1$ relatively to $\widetilde{z}_1$, so it is equal to the index of $\delta$ relatively to $\widetilde{z}_1$.  When $\widetilde{z}$ is in $\widetilde{W}_1$,  the loop $\delta$ is included in  $\widetilde{W}_1$ and is transverse to $\widetilde{\mathcal{F}}_1$, so $L(\widetilde{I}_1, \widetilde{z},\widetilde{z}_1)$ is positive; when $\widetilde{z}$ is not in $\widetilde{W}_1$, it is in one of the connected component of $\widetilde{M}_{Y_0}\setminus\overline{\widetilde{W}_1}$, and so is $\delta$, therefore $L(\widetilde{I}_1, \widetilde{z},\widetilde{z}_1)$ is equal to $0$.
 Since  $\widetilde{I}^*$ fixes $\widetilde{z}_0$ and $\widetilde{z}_1$,  the linking number $L(\widetilde{I}^*,\widetilde{z}_0,\widetilde{z}_1)$ is equal to $0$.  But  the restriction to $\widetilde{M}\setminus\{\widetilde{z}_1\}$ of $\widetilde{I}^*$ and $J_{\widetilde{z}_1}\widetilde{I}_1$ are homotopic, so we have
\[ L(\widetilde{I}_1, \widetilde{z}_0,\widetilde{z}_1)=L(\widetilde{I}^*,\widetilde{z}_0,\widetilde{z}_1)-1=-1,\]
and find a contradiction.
\end{proof}

\begin{lemma}\label{L: mainproof-plane with boundary more that 2 points}
Suppose that $(Y, I)$ is maximal in $\mathcal{I}$, that  $I$ is not torsion-low at $z\in Y$,   that the connected component of $M\setminus(Y\setminus\{z\})$ containing $z$ is homeomorphic to a plane and its  boundary in $M$ contains at least two points of $Y\setminus\{z\}$. If for every  maximal extension $(Y', I')$ of $(Y\setminus\{z\},I)$ and every $z'\in Y'\setminus (Y\setminus\{z\})$, $I'$ is not torsion-low at $z'$, then there exists a maximal extension  $(Y', I')\in \mathcal{I}$ of $(Y\setminus \{z\},I)$ such that $\# (Y'\setminus (Y\setminus\{z\}))>1$.
\end{lemma}

The idea of the proof of this lemma is similar to that of Lemma \ref{L: mainproof-general case-1}.
The difference is the following: in the assumption of Lemma \ref{L: mainproof-general case-1},  the connected component of $M\setminus(Y\setminus\{z\})$ containing $z$ is neither homeomorphic to a sphere nor to a plane;
but in the assumption of this lemma,  the connected component of $M\setminus(Y\setminus\{z\})$ containing $z$ is homeomorphic to a plane. So we do not lift the objects to the universal covering space, but we can use different techniques and get the same results.

\begin{proof}[Proof of Lemma \ref{L: mainproof-plane with boundary more that 2 points}]
 Fix a maximal $(Y,I)\in\mathcal{I}$ and $z_0\in Y$ satisfying the assumptions of this lemma. Write $Y_0=Y\setminus \{z_0\}$, and denote by $M_{Y_0}$ the connected component of $M\setminus Y_0$ containing $z_0$. Then  $M_{Y_0}$ is homeomorphic to a plane and $\#\partial M_{Y_0}> 1$. As in the proof of Lemma \ref{L: mainproof-general case-1}, since $I$ is not torsion-low at $z_0$, one has to consider the following four cases:
\begin{itemize}
\item[i)]$z_0$ is an isolated fixed point of $f$ and there exists a local isotopy $I'_{z_0}> I$ at $z_0$ which does not have a positive rotation type;
\item[ii)] $z_0$ is not an isolated fixed point of $f$ and there exists a local isotopy $I'_{z_0}> I$ at $z_0$ which has a negative rotation type;
\item[iii)]$z_0$ is an isolated fixed point of $f$ and there exists a local isotopy $I'_{z_0}<I$ at $z_0$ which does not have a negative rotation type;
\item[iv)] $z_0$ is  not an isolated fixed point of $f$ and  there exists a local isotopy $I'_{z_0}< I$ at $z_0$ which has a positive rotation type.
\end{itemize}
As in the proof of Lemma \ref{L: mainproof-general case-1}, we only study the first two cases.

Let $\mathcal{F}_Y$ be a transverse foliation of $I$. As in the proof of Lemma \ref{L: mainproof-general case-1}, we know that  $z_0$ is a source of $\mathcal{F}_Y$.

Recall that $M_{Y_0}$ is homeomorphic to a plane, and $I$ fixes $\partial M_{Y_0}$ and $z_0$. Near the end $\infty$, $I|_{M_{Y_0}}$ can be viewed as a local isotopy at $\infty$.
 When $M$ is homeomorphic to the sphere or the plane, by Proposition \ref{P: zero prime end rotation number}, we know that $I|_{M_{Y_0}}$ can be blown up at $\infty$ and the blow-up rotation number  $\rho(I|_{M_{Y_0}},\infty)$, that was defined in Section \ref{S: pre-blow up},  is equal to $0$; in the other case, by conjugate $I|_{M_{Y_0}}$ to its lift to the universal covering space of $M$, we can still apply Proposition \ref{P: zero prime end rotation number} and get the same result.

Let $I^*$ be a maximal extension of $(\{z_0\},J_{z_0}I|_{M_{Y_0}})$, where  $J_{z_0}$ is an identity isotopy of the identity map of $M_{Y_0}$ that fixes $z_0$ and satisfies $\rho_s(J_{z_0},z_0)=\{1\}$. Let $\mathcal{F}^*$ be a transverse foliation of $I^*$. In  cases i) and  ii), by the same argument in the proof of Lemma \ref{L: mainproof-general case-1}, we know that $z_0$ is not a sink of $\mathcal{F}^*$.

 We can assert that $\infty$ is a source of $\mathcal{F}^*$.  Indeed, when the total area of $M_{Y_0}$ is finite, $f|_{M_{Y_0}}$ is area preserving as a local homeomorphism at $\infty$, referring to Section \ref{S: pre-local rotation set}, we know that  $\rho_s(I|_{M_{Y_0}},\infty)$ is not empty and is reduced to $0$ by the assertion vii) of Proposition \ref{P: pre-rotation set}. Then, by the assertion i) of Proposition \ref{P: pre-rotation set}, $\rho_s(I^*,\infty)$ is reduced to $-1$,  and by the assertion v) of Proposition \ref{P: pre-rotation set}, $\infty$ is a source of $\mathcal{F}^*$.  However, the total area of $M_{Y_0}$ may be infinite. In this case, we can not get the result that $\rho_s(I|_{M_{Y_0}},\infty)$ is not empty.  But in any case, we can prove the assertion by  considering the following two cases:
 \begin{itemize}
 \item[-] Suppose that  $\rho_s(I|_{M_{Y_0}},\infty)$ is not empty. As in the case where the total area of $M_{Y_0}$ is finite,  $\rho_s(I|_{M_{Y_0}},\infty)$ is  reduced to $0$, and   $\rho_s(I^*,\infty)$ is reduced to $-1$. Therefore, $\infty$ is a source of $\mathcal{F}^*$ by the assertion v) of Proposition  \ref{P: pre-rotation set}.
 \item[-] Suppose that $\rho_s(I|_{M_{Y_0}},\infty)$ is  empty. Since  $f|_{M_{Y_0}}$ is area preserving, $f|_{M_{Y_0}}$ is not conjugate to a contraction or an expansion at $\infty$. Referring to Section \ref{S: pre-local rotation set}, we know that  the germ of $f|_{M_{Y_0}}$ at $\infty$ is conjugate to a local homeomorphism $z\mapsto e^{i2\pi \frac{p}{q}}z(1+z^{qr})$ at $0$ with $q,r\in\mathbb{N}^+$ and $p\in\mathbb{Z}$.  The latter local homeomorphism can be blown up at $0$, and the extension at the added circle is the counter-clockwise rotation through an angle $2\pi p/q$. Recall that $\rho(I|_{M_{Y_0}},\infty)=0$,  we can deduce that $p\in q\mathbb{Z}$, and hence the Lefschetz index $i(f|_{M_{Y_0}},\infty)>1$. By the first assertion of Proposition \ref{P: pre-index and rotation type}, there exists a local isotopies $I_0$ of $f$ at $\infty$ such that for all locally transverse foliation $\mathcal{F}_0$ of $I_0$, we have $i(\mathcal{F}_0,\infty)=i(f,\infty)>1$. Moreover, by Proposition \ref{P: pre-dynamics of a foliation}, we know that $\infty$ is a petal or a mixed type singularity for  $\mathcal{F}_0$. Then, by Proposition \ref{P: pre-blow-up}, the blow-up rotation number $\rho(I_0,\infty)=0$, and hence $I|_{M_{Y_0}}$ and $I_0$ are equivalent as local isotopy at $\infty$. By the second assertion of Proposition \ref{P: pre-index and rotation type}, $\infty$ is a source of $\mathcal{F}^*$.
\end{itemize}

 Then, like in the proof of Lemma \ref{L: mainproof-general case-1}, we deduce that $I^*$ fixes  finitely many points, that there exists a sink $z_1$ of $\mathcal{F}^*$, and that there exists a maximal extension $(Y',I')\in\mathcal{I}$ of $(Y_0,I)$ such that $Y_0\cup\{z_1\}\subset Y'$  and  $\#(Y'\setminus Y_0) >1$.
\end{proof}

The following lemma is a  consequence of the previous two lemmas.

\begin{lemma}\label{L: mainproof-general case-2}
Let us suppose that  $(Y, I)$ is maximal in $(\mathcal{I},\precsim)$, that $I$ is not torsion-low at $z\in Y$.
When $M$ is a sphere, we suppose that $Y\setminus \{z\}$ contains at least two points; and when $M$ is a plane, we suppose that $Y\setminus\{z\}$ is not empty. If for every  maximal extension $(Y', I')$ of $(Y\setminus\{z\},I)$ and every point $z'\in Y'\setminus (Y\setminus\{z\})$, $I'$ is not torsion-low at $z'$, then there exists a maximal extension  $(Y'', I'')\in \mathcal{I}$ of $(Y\setminus \{z\},I)$ such that $\# (Y''\setminus (Y\setminus\{z\}))=\infty$.
\end{lemma}

\begin{proof}
  Fix a couple $(Y,I)$ maximal in $(\mathcal{I},\precsim)$ and $z\in Y$ satisfying the assumptions of the lemma. By the previous two lemmas, there exists a maximal extension $(Y_1, I_1)$ of $(Y\setminus\{z\}, I)$ such that $\#(Y_1\setminus (Y\setminus\{z\}))>1$. If $\#(Y_1\setminus (Y\setminus\{z\}))=\infty$, the proof is finished;  otherwise, we fix a point $z_1\in Y_1\setminus (Y\setminus \{z\})$. By hypothesis, $I_1$ is not torsion-low at $z_1$; the connected component of $M\setminus (Y_1\setminus\{z_1\})$ containing $z_1$ is not homeomorphic to a sphere; and when the connected component of $M\setminus (Y_1\setminus\{z_1\})$ containing $z_1$  is homeomorphic to a plane, its boundary in $M$ contains at least two points.
  Since a  maximal extension of $(Y_1\setminus\{z_1\},I_1)$  is also a maximal extension of $(Y\setminus\{z\}, I)$,  the couple $(Y_1, I_1)$ and $z_1\in Y_1$ satisfies the assumptions of the previous two lemmas. We apply the previous two lemmas, and deduce that  there exists a maximal extension $(Y_2,I_2)\in\mathcal{I}$ of $(Y_1\setminus\{z_1\}, I_1)$ such that $\#(Y_2\setminus (Y_1\setminus\{z_1\}))>1$.  If $\#(Y_2\setminus (Y_1\setminus\{z_1\}))=\infty$, the proof is finished;  if $\#(Y_2\setminus (Y_1\setminus\{z_1\}))<\infty$, we continue the construction\ldots

  Then, either we end the proof in finite steps, or we can construct a strictly increasing sequence
\[(Y\setminus\{z\}, I)\prec(Y_1\setminus\{z_1\}, I_1)\prec (Y_2\setminus\{z_2\},I_2)\prec (Y_3\setminus\{z_3\},I_3)\cdots\]
By Proposition \ref{P: pre-exist uper bound of  isotopies sequence}, there exists an upper bound $(Y_{\infty},I_{\infty})\in\mathcal{I}$ of this sequence, where  $Y_{\infty}=\overline{\cup_{n\ge 1} (Y_n\setminus\{z_n\})}$. By Theorem \ref{T: pre-exist max isotopy}, there exists a maximal extension $(Y', I')\in\mathcal{I}$ of $(Y_{\infty},I_{\infty})$. It is also a maximal extension of  $(Y\setminus\{z\}, I)$, and satisfies $\# (Y'\setminus (Y\setminus\{z\}))=\infty$.
\end{proof}

\begin{proof}[Proof of Proposition \ref{P: mainproof-generalcase}]
We will prove this proposition by contradiction. Fix a maximal element $(Y,I)\in\mathcal{I}$ and $z_0\in Y$ such that $I$ is not torsion-low at $z_0$. When $M$ is a sphere, we suppose that $Y\setminus \{z\}$ contains at least two points; and when $M$ is a plane, we suppose that $Y\setminus\{z\}$ is not empty.
Write $Y_0=Y\setminus\{z_0\}$, and suppose that for all  maximal extension $(Y', I')$ of $(Y_0,I)$ and $z'\in Y'\setminus Y_0$, $I'$ is not torsion-low at $z'$. By the previous lemma, there exists a maximal extension  $(Y', I')$ of $(Y_0,I)$ such that $\# (Y'\setminus Y_0)=\infty$.

 Denote by $M_{Y_0}$ the connected component of $M\setminus Y_0$ containing $z_0$.  Let us prove by contradiction that $Y'\setminus Y_0\subset M_{Y_0}$.  Suppose that there exists $z_1\in Y'\setminus Y_0$ that is in another component of $M\setminus Y_0$. Since both $(Y,I)$ and $(Y',I')$ are extensions of $(Y_0,I)$, the trajectory of $z_1$ along $I$ and $I'$ are  homotopic in $M\setminus Y_0$.  So,  the trajectory of $z_1$ along $I$  is homotopic to zero in $M\setminus Y_0$. When $z_1\notin M_{Y_0}$, its trajectory along $I$ is in another component of $M\setminus Y_0$, this trajectory is homotopic to zero in $M\setminus Y$, which contradicts the maximality of $(Y,I)$ by Proposition \ref{P: pre-maximal implies no contractible fixed point}.

Then, one has to consider two cases:
 \begin{itemize}
 \item[-]$M_{Y_0}$ is neither homeomorphic to a sphere nor to a plane,
   \item[-]$M_{Y_0}$ is homeomorphic to a plane, and its boundary in $M$ contains at least two points.
   \end{itemize}

We will proof the proposition in the first case. In the second case, the proof is almost the same except that we will not lift the isotopies and the foliations to the universal cover, because $M_{Y_0}$ itself is homeomorphic to a plane.

Suppose that $M_{Y_0}$ is neither homeomorphic to a sphere nor to a plane.    Let $\pi_{Y_0}:\widetilde{M}_{Y_0}\rightarrow M_{Y_0}$ be the universal cover,  $\widetilde{I}$  the identity isotopy that   lifts   $I|_{M_{Y_0}}$, $\widetilde{I}'$  the identity isotopy that   lifts   $I'|_{M_{Y_0}}$, and $\widetilde{f}$  the lift of $f|_{M_{Y_0}}$ associated to $I|_{M_{Y_0}}$.  Since both $I$ and $I'$ are maximal, the point $z_0$ does not belong to  $Y'$. Moreover, $\widetilde{f}$ is also the lift of  $f|_{M_{Y_0}}$ associated to $I'|_{M_{Y_0}}$. In particular, $\widetilde{f}$ fixes every point in $\pi_{Y_0}^{-1}(\{z_0\}\cup Y'\setminus Y_0)$.
 Fix  $\widetilde{z}_0\in \pi_{Y_0}^{-1}\{z_0\}$.

 \begin{sublem}
For every $z\in Y'\setminus Y_0$, there exists $\widetilde{z}\in\pi_{Y_0}^{-1}\{z\}$ such  that $\widetilde{z}_0$ and $\widetilde{z}$ are linked relatively to  $\widetilde{I}$.
\end{sublem}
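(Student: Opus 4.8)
The plan is to reduce the sublemma, by a covering‑space argument, to the defining property of a maximal couple — namely that $f|_{M\setminus Y}$ has no contractible fixed point associated to $I_Y|_{M\setminus Y}$. Notably, neither the area‑preserving hypothesis nor the rotation type of $I_Y$ at $z_0$ enters here; this is really just a reformulation of maximality.

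First I would set up the linking number on $\widetilde M_{Y_0}$. Since $M\setminus Y_0$ is an orientable surface which is not the sphere, its universal cover $\widetilde M_{Y_0}$ is homeomorphic to $\mathbb{R}^2$, so the linking numbers relative to $\widetilde I$ are defined. Write $\widetilde I=(\widetilde f_t)_{t\in[0,1]}$. As $\widetilde I$ lifts $I_Y|_{M\setminus Y_0}$ and $I_Y$ fixes $z_0\in M\setminus Y_0$, the trajectory of $z_0$ along $I_Y|_{M\setminus Y_0}$ is constant, hence $t\mapsto\widetilde f_t(\widetilde z_0)$ is the constant path at $\widetilde z_0$; and since $\widetilde f$ fixes every point of $\pi_{Y_0}^{-1}(Y'\setminus Y_0)$, for every lift $\widetilde z$ of a point $z\in Y'\setminus Y_0$ the trajectory $t\mapsto\widetilde f_t(\widetilde z)$ is a loop based at $\widetilde z$. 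Feeding these two facts into the definition of the linking number, one gets that $L(\widetilde I,\widetilde z_0,\widetilde z)$ equals the winding number of the loop $t\mapsto\widetilde f_t(\widetilde z)$ about $\widetilde z_0$.

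The core step is to translate this winding number downstairs. Let $\alpha$ be the trajectory of $z$ along $I_Y$: since $I_Y$ fixes $Y$ pointwise and $z\notin Y$, the loop $\alpha$ is contained in $M\setminus Y$, and it is null‑homotopic in $M\setminus Y_0$ (this is exactly why $z$ could be adjoined in the extension $(Y',I_{Y'})$). The loop $t\mapsto\widetilde f_t(\widetilde z)$ is a lift $\widetilde\alpha$ of $\alpha$ to $\widetilde M_{Y_0}$, the various lifts of $\alpha$ are obtained from one another by the deck group $G$ of $\pi_{Y_0}$, and $\pi_{Y_0}^{-1}\{z_0\}=G\cdot\widetilde z_0$. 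Because $\widetilde M_{Y_0}\cong\mathbb{R}^2$ and $\pi_{Y_0}^{-1}\{z_0\}$ is discrete, a closed loop in $\widetilde M_{Y_0}\setminus\pi_{Y_0}^{-1}\{z_0\}$ is null‑homotopic there iff all its winding numbers about the points of $\pi_{Y_0}^{-1}\{z_0\}$ vanish; and, deck transformations being orientation‑preserving homeomorphisms of $\widetilde M_{Y_0}$, the winding number of $g\cdot\widetilde\alpha$ about $\widetilde z_0$ equals that of $\widetilde\alpha$ about $g^{-1}\widetilde z_0$. Since $\widetilde M_{Y_0}\setminus\pi_{Y_0}^{-1}\{z_0\}$ is a covering of $M\setminus Y$ and coverings detect null‑homotopy, these remarks combine to show: some lift $\widetilde z$ of $z$ satisfies $L(\widetilde I,\widetilde z_0,\widetilde z)\ne0$ if and only if $\alpha$ is not null‑homotopic in $M\setminus Y$, i.e. if and only if $z$ is not a contractible fixed point of $f|_{M\setminus Y}$ associated to $I_Y|_{M\setminus Y}$.

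To conclude, I would invoke the maximality of $(Y,I_Y)$: by Proposition~\ref{P: pre-maximal implies no contractible fixed point} (see also Theorem~\ref{T: pre-exist max isotopy}), $f|_{M\setminus Y}$ has no contractible fixed point associated to $I_Y|_{M\setminus Y}$. Now $z\in Y'\subset\mathrm{Fix}(f)$, and $z\ne z_0$ (as $z_0\notin Y'$) while $z\notin Y_0$, so $z$ is a fixed point of $f$ lying in $M\setminus Y$; hence it is non‑contractible, and by the equivalence above some lift $\widetilde z\in\pi_{Y_0}^{-1}\{z\}$ is linked with $\widetilde z_0$. The argument is short; the only point I would write out carefully is the middle paragraph — keeping straight the three spaces $\widetilde M_{Y_0}$, $\widetilde M_{Y_0}\setminus\pi_{Y_0}^{-1}\{z_0\}$ and $M\setminus Y$ and the action of $G$, so that being linked with the chosen lift $\widetilde z_0$ is correctly matched with having nonzero winding about some point of $\pi_{Y_0}^{-1}\{z_0\}$.
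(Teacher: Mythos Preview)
Your reduction to maximality is the right instinct, and most of the covering-space setup is correct: the trajectory $\alpha$ of $z$ along $I_Y$ is a loop in $M\setminus Y$, it is contractible in $M\setminus Y_0$ (so it lifts to a closed loop $\widetilde\alpha$), and the covering $\widetilde M_{Y_0}\setminus\pi_{Y_0}^{-1}\{z_0\}\to M\setminus Y$ does detect null-homotopy. The gap is the sentence ``a closed loop in $\widetilde M_{Y_0}\setminus\pi_{Y_0}^{-1}\{z_0\}$ is null-homotopic there iff all its winding numbers about the points of $\pi_{Y_0}^{-1}\{z_0\}$ vanish.'' That is false: the fundamental group of a plane minus a discrete closed set is \emph{free}, not free abelian, and winding numbers only see its abelianisation. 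A commutator of small loops around two distinct punctures has all winding numbers zero without being null-homotopic. The implication you need (non-null-homotopic $\Rightarrow$ some winding number nonzero) is exactly the one that fails, so from ``$\alpha$ is not contractible in $M\setminus Y$'' you cannot conclude that some $L(\widetilde I,\widetilde z_0,\widetilde z)\ne 0$.

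This is why the paper does not bypass the foliation. It replaces the trajectory by a homotopic loop $\delta$ that is \emph{positively transverse} to the lifted foliation $\widetilde{\mathcal F}$, and looks at a component $U$ where the winding-number function $\Lambda$ is maximal. Positive transversality forces $\partial U$ (with $U$ on its left) to inherit the orientation of $\delta$ and thus to be itself positively transverse to $\widetilde{\mathcal F}$; such a closed transverse curve must enclose a singularity. Since the singular set of $\widetilde{\mathcal F}$ is precisely $\pi_{Y_0}^{-1}\{z_0\}$, this locates a lift of $z_0$ with strictly positive winding number. In other words, the transverse foliation is what upgrades ``homotopically nontrivial'' to ``homologically nontrivial,'' and your purely topological shortcut does not supply a substitute for that step.
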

\begin{proof}
  Let $\mathcal{F}$ be a transverse foliation of $I$, and $\widetilde{\mathcal{F}}$ be the lift of $\mathcal{F}|_{M_{Y_0}}$ to $\widetilde{M}_{Y_0}$. Fix  $z\in Y'\setminus Y_0$ and $\widetilde{z}\in\pi_{Y_0}^{-1}\{z\}$.  Since $I$ is a maximal isotopy, the trajectory of $\widetilde{z}$ along $\widetilde{I}$ is a loop that is not homotopic to zero in $\widetilde{M}_{Y_0}\setminus \pi_{Y_0}^{-1}\{z_0\}$. Let $\delta$ be a loop that  is positively transverse to $\widetilde{\mathcal{F}}$, and is homotopic to the trajectory of $\widetilde{z}$ along $\widetilde{I}$ in $\widetilde{M}_{Y_0}\setminus \pi_{Y_0}^{-1}\{z_0\}$. By choosing suitable $\delta$, we can suppose that $\delta$ intersects itself at most finite times,  that each intersection point is a double point, and that the intersections are transverse. So, $\widetilde{M}_{Y_0}\setminus\delta$ has finitely many components, and we can define a locally constant function $\Lambda:\widetilde{M}_{Y_0}\setminus\delta\rightarrow \mathbb{Z}$ such that
 \begin{itemize}
 \item[-] $\Lambda$ is equal to $0$ in the component of $\widetilde{M}_{Y_0}\setminus\delta$ that is not relatively compact;
 \item[-] $\Lambda(\widetilde{z}')-\Lambda(\widetilde{z}'')$ is equal to the (algebraic) intersection number of $\delta$ and any arc from $\widetilde{z}''$ to $\widetilde{z}'$.
 \end{itemize}
 This function is not constant, and we have either $\max \Lambda>0$ or $\min \Lambda<0$. Suppose that we are in the first case (the other case can be treated similarly). Let $U$ be a component of $\widetilde{M}_{Y_0}\setminus\delta$ such that $\Lambda$ is equal to $\max \Lambda>0$ in $U$.
\begin{figure}[h]
 \center
 \includegraphics[width=4cm]{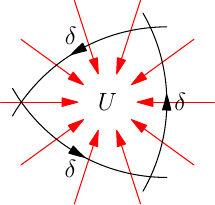}
 \caption{}
 \label{fig: dual-function}
 \end{figure}
  As in Figure \ref{fig: dual-function}, the boundary of $U$ is a  sub-curve of $\delta$ with the orientation such that $U$ is to the left of its boundary, and is also transverse to $\widetilde{\mathcal{F}}$. So, there exists a singularity of $\widetilde{\mathcal{F}}$ in $U$. Note the fact that the set of singularities of $\widetilde{\mathcal{F}}$ is $\mathrm{Fix}(\widetilde{I})=\pi^{-1}_{Y_0}\{z_0\}$. So, there exists  an automorphism $T$ of the universal cover space such that $T(\widetilde{z}_0)$ belongs to $U$, and the index of $\delta$ relatively to $T(\widetilde{z}_0)$ is positive. Note also that the linking number $L(\widetilde{I},\widetilde{z}, T(\widetilde{z}_0))$ is equal to the index of $\delta$ relatively to $T(\widetilde{z}_0)$, and hence is equal to $\Lambda(T(\widetilde{z}_0))$ by definition of $\Lambda$. So, $T(\widetilde{z}_0)$ and $\widetilde{z}$ are  linked relatively to $\widetilde{I}$. Consequently, $\widetilde{z}_0$ and $T^{-1}(\widetilde{z})$ are linked relatively to $\widetilde{I}$.
 \end{proof}

As in the proof of Lemma \ref{L: mainproof-general case-1}, we know that $\widetilde{f}$ can be blown up at $\infty$. Since $\infty$ is accumulated by both the points in $\pi_{Y_0}^{-1}\{z_0\}$ and the points in $\pi_{Y_0}^{-1}(Y'\setminus Y_0)$, both $\rho_s(\widetilde{I},\infty)$ and $\rho_s(\widetilde{I}',\infty)$ contain $0$. By the assertion vii) of  Proposition \ref{P: pre-rotation set}, both $\rho_s(\widetilde{I},\infty)$ and $\rho_s(\widetilde{I}',\infty)$ are reduced to $0$, so $\widetilde{I}$ and $\widetilde{I'}$  are equivalent as local isotopies at $\infty$.
Therefore, for every point $z\in Y'\setminus Y_0$, there exists $\widetilde{z}\in\pi_{Y_0}^{-1}\{z\}$ such  that $\widetilde{z}_0$ and $\widetilde{z}$ are linked relatively to  $\widetilde{I}'$. Let us denote by $L$ the set of points $\widetilde{z}\in\pi_{Y_0}^{-1}(Y'\setminus Y_0)$ such that $\widetilde{z}$ and $\widetilde{z}_0$  are linked relatively to $\widetilde{I}'$. It  contains infinitely many points.

Let $\gamma$ be the trajectory of $\widetilde{z}_0$ along the isotopy $\widetilde{I}'$, and $V$ be the connected component of $\widetilde{M}_{Y_0}\setminus \gamma$ containing $\infty$. Then $K=\widetilde{M}_{Y_0}\setminus V$ is a compact set that contains all the  fixed points of $\widetilde{I}'$ that are linked with $\widetilde{z}_0$ relatively to $\widetilde{I}'$. In particular, $L\subset K$. Then, there exists  $\widetilde{z}'\in K$ that is accumulated by points of $L$. We know that $\mathrm{Fix}(\widetilde{I}')$ is a closed set. So, $\widetilde{z}'$ belongs to $\mathrm{Fix}(\widetilde{I}')=\pi^{-1}(Y'\setminus Y_0)$. We find a point  $\widetilde{z}'$  that is not isolated in $\pi_{Y_0}^{-1}(Y'\setminus Y_0)$, and  a point $z'=\pi_Y(\widetilde{z}')$  that is not isolated in $Y'$. By Remark \ref{R: ac means torsion-low},  $I'$ is torsion-low at $z'$. We get a contradiction.
\end{proof}

\begin{proof}[Proof of Proposition \ref{P: mainproof-plane}]
We only need to prove that there exists $(X,I)\in\mathcal{I}_0$ such that $X\ne \emptyset$, because one knows $(\emptyset, I_0)\precsim (X,I)$ for all isotopy $I_0$ from the identity to $f$ and all $(X,I)\in\mathcal{I}$ when $M$ is a plane.

One has to consider the following  three cases:
\begin{itemize}
\item[-] Suppose that $\mathrm{Fix}(f)$ is reduced to one point $z_0$. In this case,  similarly to the proof  of Proposition \ref{P: local rotation type}, we can find an isotopy $I_0$ that fixes $z_0$ and is torsion-low at $z_0$. Then, $(\{z_0\}, I_0)$ belongs to $\mathcal{I}_0$.

\item[-] Suppose that there exists a connected component $X$ of $\mathrm{Fix}(f)$ that is not reduced to one point. By Proposition \ref{P: pre-connected implies unlinked in plane},  there exists a maximal isotopy $I$ of $f$ that fixes all the points in $X$. Because  every $z\in X$ is not isolated in $X$, $I$ is torsion-low at $x$. So, $(X,I)$ belongs to $\mathcal{I}_0$.

\item[-] Suppose that $\mathrm{Fix}(f)$ is totally disconnected and  contains at least two points. In this case, there exists a maximal $(Y,I)\in\mathcal{I}$ such that $\#Y\ge 2$. If $I$ is torsion-low at a point in $Y$,  the proof is finished; if $I$ is not torsion-low at every $z\in Y$, we fix $z_0\in Y$ and can find a maximal extension $(Y',I')$ of $(Y\setminus\{z_0\}, I)$ and $z'\in Y'\setminus (Y\setminus\{z_0\})$ such that $I'$ is torsion-low at $z'$ by Proposition \ref{P: mainproof-generalcase}. Consequently, $(\{z'\}, I')$ belongs to $\mathcal{I}_0$. \qedhere
\end{itemize}
\end{proof}

\begin{proof}[Proof of Proposition \ref{P: mainproof-sphere}]
  One knows $(X,I)\precsim(Y,I')$ for all $(Y,I')\in\mathcal{I}$ satisfying $X\subset Y$, when $M$ is a sphere and $\#X\le 1$. So, we only need to prove the following two facts:
\begin{itemize}
\item[i)] there exists $(X,I)\in\mathcal{I}_0$ such that $X\ne \emptyset$;
\item[ii)] given $(X,I)\in \mathcal{I}_0$ such that $\# X=1$, there exists $(X',I')\in\mathcal{I}_0$ such that $X\subsetneqq X'$.
\end{itemize}
One has to consider the following two cases:
\begin{itemize}
\item[-] Suppose that $\#\mathrm{Fix}(f)=2$.  In this case, we will prove that there exists an identity isotopy that fixes both fixed points and is torsion-low at each fixed point, which implies both i) and ii).

    Denote by $N$ and $S$ the two fixed points. Since both $N$ and $S$ are isolated fixed points, we can find an identity isotopy $I$ that fixes both $N$ and $S$ and is torsion-low at $S$. We will prove that $I$ is also torsion-low at $N$.

     Let $J_N$ (resp. $J_S$) be an identity isotopy of the identity map of the sphere that fixes both $N$ and $S$ and satisfies $\rho_s(J_N, N)=\{1\}$ (resp. $\rho_s(J_S,S)=\{1\}$). One knows that  $J_N$ and $J^{-1}_S$ are homotopic relatively to $\{N,S\}$.

     For every $k\ge 1$, since $I$ is torsion-low at $S$, $J_S^{-k}I$ has a negative rotation type as a local isotopy at $S$. Let $\mathcal{F}_k$ be a transverse foliation of $J_S^{-k}I$. By the first statement of Proposition \ref{P: local rotation type}, one knows that  $S$ is a source of $\mathcal{F}_k$. Since $f$ is area preserving and $\mathcal{F}_k$ has exactly two singularities, $N$ is a sink of $\mathcal{F}_k$. Note the fact that the restrictions to $M\setminus\{S,N\}$ of $J_N^kI$ and $ J_S^{-k}I$ are homotopic. So, $J_N^kI$ has a positive rotation type as a local isotopy at $N$.

     Similarly, for every $k\ge 1$, $J_N^{-k}I$ has a negative rotation type as a local isotopy at $N$. Therefore, $I$ is torsion-low at $N$.

\item[-] Suppose that $\#\mathrm{Fix}(f)\ge 3$.

 We can prove i) by a similar discussion to the second part and the third part of the proof of Proposition \ref{P: mainproof-plane}. Alternatively, we  give the following direct proof.  Fix a maximal $(Y,I)\in\mathcal{I}$ such that $\# Y\ge 3$. If $Y$ is infinite, there exists a point $z\in Y$ that is not isolated in $Y$, and hence $I$ is torsion-low at $z$. If $Y$ is finite, we consider a transverse foliation of $I$ and know that there is a saddle singulary point $z$ of $\mathcal{F}$ by the Poincar\'e-Hopf formula, Proposition \ref{P: pre-dynamics of a foliation} and Remark \ref{R: pre-local dynamics of foliation-area preserving}, so $I$ is torsion-low at $z$. In both cases, there exists $z\in Y$ such that $(\{z\}, I)\in\mathcal{I}_0$.

 To prove ii), we fix $(X,I)\in \mathcal{I}_0$ such that $X$ contains only one point, and denote this fixed point by $S$.
 For a maximal extension $(Y,I')$ of $(X,I)$ that is torsion-low at $S$, if $I'$ is torsion-low at another fixed point, we get the result; if $I'$ fixes at least three fixed point and is not torsion-low at any fixed point except $S$, we apply  Proposition \ref{P: mainproof-generalcase} and can  find another maximal extension of $(X,I)$ that is torsion-low at another fixed point and is equivalent to $I'$ as local isotopies at $S$. So, we only need to prove that there exists a maximal extension $(Y,I')$ of $(X,I)$ that is torsion-low at $S$ and satisfies one of the following two condition: $I'$ is torsion-low at another fixed point or $\# Y\ge 3$.

 Fix  a maximal extension $(Y,I')\in \mathcal{I}$  of $(X,I)$ such that $\rho_s(I',S)=\rho_s(I,S)$. Of course, $I'$ is torsion-low at $S$. If $\#(Y\setminus X)\ge 2$, we finish the proof.
 Now, we suppose that $Y=\{S,N\}$. By the maximality of $(Y,I')$, $S$ is not accumulated by contractible fixed points of $f$ associated to $I'$, and hence $I'$ has either a positive or a negative rotation type at $S$.
 We suppose that $I'$ has a positive rotation type, the proof in the other case is similar.

 Let $J_S$ be the isotopy of the identity that fixes $S$ and $N$ and satisfies $\rho_s(J_S,S)=\{1\}$. We consider a maximal extension $I''$ of $(Y,J_S^{-1}I')$. Since $I'$ is torsion-low at $S$, $S$ is either accumulated by contractible fixed points of $f$ associated to $I''$ or has a negative rotation type.  In the first case, $I''$ is torsion-low at $S$ and satisfies $\#\mathrm{Fix}(I'')\ge 3$; in the second case, $I''$ is still torsion-low at $S$. In both case, $I''$ is torsion-low at $S$. If $\#\mathrm{Fix}(I'')\ge 3$, we finish the proof.

Now we suppose that $I''$ has a negative rotation type at $S$ and $\mathrm{Fix}(I'')=Y=\{N,S\}$, we will finish the proof by proving that both $I'$ and $I''$ is torsion-low at $N$. Let $\mathcal {F}'$ be a transverse foliation of $I'$ and $\mathcal{F}''$ a transverse foliation of $I''$. Then, $S$ is a sink of $\mathcal{F}'$ and is a source of $\mathcal{F}''$. So, $N$ is a source of $\mathcal{F}'$ and is a sink of $\mathcal{F}''$. Therefore, $I'$ has a negative rotation type at $N$, $I''$ has a positive rotation type at $N$. Recall that $I''$ is a maximal extension of $(Y,J_S^{-1}I')\sim(Y,J_N I')$, where $J_N$ be the isotopy of the identity that fixes $S$ and $N$ and satisfies $\rho_s(J_N,N)=\{1\}$. So, both $I'$ and $I''$ are torsion-low at $N$.\qedhere
\end{itemize}
\end{proof}

\begin{remark}
In our proof, we consider first a maximal extension $I'$ of $(X,I)$ that satisfies $\rho_s(I',S)=\rho_s(I,s)$.  When $I'$ is not a ``good" one, we construct another identity isotopy $I''$ to get the result. Even though $\rho_s(I, S)$ and $\rho_s(I'',S)$ are different, $I''$ is still an extension of $(X,I)$ because $M$ is a sphere and $X$ is reduced to a single point. It gives an explanation why we exclude the case that $M$ is a sphere and $X$ is reduced to a single point in Remark \ref{R: globle torsion-low 1}.
\end{remark}

\bigskip

We finish the section by proving Proposition \ref{P: torsion-low isotopy has most fixed points}.
\begin{proof}[Proof of Proposition \ref{P: torsion-low isotopy has most fixed points}]
Let $f$ be an area preserving homeomorphism of $M$ that is isotopic to the identity and has finitely many fixed points. When $\mathrm{Fix}(f)$ is  empty, the proposition is trivial. So, we suppose that $\mathrm{Fix}(f)$ is not empty. Let
 \[n=\max\{\#\mathrm{Fix}(I): I \text{ is an identity isotopy of } f\}.\]
One has to consider the following three cases:
\begin{itemize}
\item[-]  Suppose that $M$ is a plane and $f$ has exactly one fixed point. As  in the first part of the proof of Proposition \ref{P: mainproof-plane}, there exists  an identity isotopy  that fixes this fixed point and is torsion-low at this fixed point.
\item[-] Suppose that $M$ is a sphere and $f$ has exactly two fixed points. As  in the first part of the proof of Proposition \ref{P: mainproof-sphere}, there  exists  an identity isotopy that fixes these two fixed points and  is torsion-low at each fixed point.
\item[-] Suppose that we are not in the previous two cases. Let $\mathfrak{I}$ be the set of identity isotopies of $f$ with $n$ fixed points. It is not empty.  We can give a preorder $\lhd$ over $\mathfrak{I}$ such that $I\lhd I'$  if and only if
   \[\#\{z\in\mathrm{Fix}(I), I \text{ is torsion-low at } z\}\le\#\{z\in\mathrm{Fix}(I'), I' \text{ is torsion-low at } z\}.\]
    Since $\#\{z\in\mathrm{Fix}(I), I \text{ is torsion-low at } z\}$ is not larger than $n$ for all $I\in\mathfrak{I}$,  $\mathfrak{I}$  has a maximal element. Fix a maximal element $I$ of  $\mathfrak{I}$. We will prove by contradiction that $I$ is torsion-low at every $z\in\mathrm{Fix}(I)$.

    Suppose that $I$ is not torsion-low at $z_0\in\mathrm{Fix}(I)$. Write $Y_0=\mathrm{Fix}(I)\setminus\{z_0\}$. By Proposition \ref{P: mainproof-generalcase}, there exist a maximal extension $I'$ of $(Y_0, I)$ and $z'\in\mathrm{Fix}(I')\setminus Y_0$ such that $I'$ is torsion-low at $z'$. This  contradicts with the maximality of   $I$ in $(\mathfrak{J},\lhd)$. \qedhere
\end{itemize}
\end{proof}

\section{Examples}\label{S: examples}

\begin{example}\label{Ex: local rotation set is infty}(An orientation and area  preserving  homeomorphism whose local rotation set is reduced to $\infty$)

Let $f$ be the  homeomorphism of $\mathbb{C}$ defined by
\[f(z)=\left\{\begin{array}{ll}0 &\text{ for } z=0,\\
z e^{i2\pi/|z| } &\text{ for } z\ne 0.\end{array}\right.\]
It is area preserving and fixes $0$. Moreover, $\rho_s(I, 0)$ is reduced to $+\infty$ for every  isotopy $I$ of $f$ fixing $0$.
\end{example}

\begin{example}\label{Ex: area preserving condition is necessay}(Example of Remark \ref{R: area preserving is neccesary for the existence of globle torsion-low isotopy})

We will construct an orientation preserving diffeomorphism $f$ of the sphere with $2$ fixed points such  that $f$ is area preserving in a neighborhood of each fixed point but there  exists no torsion-low maximal isotopy of $f$.

Let $\varphi$ be a diffeomorphism of $[0,1]$  that satisfies
\[\left\{\begin{array}{ll}\varphi(y)=y &\text{ for } y\in [0,1/6]\cup[5/6,1],\\
  \varphi(y)<y &\text{ for } y\in(1/6, 5/6).\end{array}\right.\]
 Let $g$ be a diffeomorphism of $\mathbb{R}\times[0,1]$ that is defined by
 \[g(x,y)=(x+3y,\varphi(y)).\]
  We define an equivalence relation $\sim$ on $\mathbb{R}\times[0,1]$ such that
 \[\left\{\begin{array}{ll}(x,y)\sim(x+1,y) &\text{ for all } (x,y)\in \mathbb{R}\times(0,1),\\
  (x,0)\sim(x',0) &\text{ for all } x,x'\in\mathbb{R},\\
   (x,1)\sim(x',1) &\text{ for all } x,x'\in\mathbb{R}.\end{array}\right.\]
   Then, $\mathbb{R}\times[0,1]/_{\sim}$ is a sphere, and $g$ descends to a diffeomorphism $f$ of the sphere that has  two fixed points and is area preserving near each fixed point. Note the facts that  every maximal isotopy $I$ fixes both fixed points of $f$, that the rotation number of $I$ at each fixed point is an integer, and that the sum of the rotation numbers of $I$ at both fixed point is $3$. By the last statement of Proposition \ref{P: rotation set of a  torsion-low local isotopy}, there does not exist any torsion-low maximal isotopy of $f$.
\end{example}

\begin{example}(Example of Remark \ref{R: inegality is not strict}) \label{Ex: inequality not strict}

We will construct an orientation and area preserving diffeomorphism of the sphere such that there does not exist any maximal isotopy $I$ satisfying
\[-1<\rho(I,z)<1, \text{ for every } z\in \mathrm{Fix}(I).\]

Let $g$ be a diffeomorphism of $\mathbb{R}\times[0,1]$ that is defined by
\[g(x,y)=(x+y,y).\]
We define an equivalence relation $\sim$ on $\mathbb{R}\times[0,1]$ as in Example \ref{Ex: area preserving condition is necessay}. Then, we  get a sphere. and $g$ descends to an orientation and area preserving diffeomorphism $f$ of the sphere that has exactly two fixed points.  Note the facts that  every maximal isotopy $I$ fixes both fixed points of $f$,  that the rotation number of $I$ at each fixed point is an integer, and that the sum of the rotation numbers of $I$ at both fixed point is $1$.   So, there does not exist any maximal isotopy $I$ such that for all $z\in\mathrm{Fix}(I)$,
\[-1<\rho(I,z)<1.\]
\end{example}

\begin{example}(Example of Remark \ref{R: globle torsion-low 1})\label{Ex: local torsion low not imply globle}

 In this example, we will construct an isotopy $I^*$ on the sphere such that $I^*$ is torsion-low at a fixed point $z$, but there does not exist any torsion-low maximal isotopy  that  is equivalent to $I^*$ as a local isotopy at $z$.

We will induce the isotopy by generating functions (see Appendix \ref{S: generating fucntion}).

Let $\varphi$ be a smooth $1$-periodic  function on $\mathbb{R}$  that satisfies
\[\varphi(0)=\varphi(3/4)=\varphi(1)=0 \text{ and }|\varphi|\le\frac{1}{2\pi},\]
 \[\left\{ \begin{array}{ll}\varphi(s)>0 \text{ for }  0<s<3/4\\
 \varphi(s)<0 \text{ for } 3/4<s<1 \end{array}\right. ,\quad \text{and } \int_0^1\varphi(s)ds=0,\]
 \[|\varphi(s)|<s\sin^2\frac{\pi}{s} \quad \text{for} \quad 3/4<s<1.\]

  Let
\[ g(x,y)=\left\{\begin{array}{ll} 0 &\text{ for } y\le 0,\\
\int_0^y (s\sin^2\frac{\pi}{s}+\varphi(s)\sin ^2\pi x) ds   &\text{ for }0<y< 1,\\
\int_0^1 s\sin^2\frac{\pi}{s} ds   &\text{ for }  y\ge 1.
\end{array}\right.\]
Then, $g$ is constant on  $\mathbb{R}\times (-\infty,0]$  and on $\mathbb{R}\times [1,\infty)$ respectively, and satisfies $g(x+1,y)=g(x,y)$. Moreover, one knows that
\[ \partial^2_{12} g(x,y)=\left\{\begin{array}{ll} 0 &\text{ for } y\le 0 \text{ or } y\ge 1,\\
\pi\varphi(y)\sin(2\pi x) &\text{ for }0<y<1.
\end{array}\right.\]
So, $\partial^2_{12}g\le\frac{1}{2}<1$. Therefore, $g$ defines an identity isotopy $I=(f_t)_{t\in[0,1]}$ by the following equations:
\[f_t(x,y)=(X^t,Y^t)\Leftrightarrow\left\{\begin{aligned} X^t-x & = & t \partial_2 g(X^t,y),\\ Y^t-y & = &  -t \partial_1 g(X^t,y) , \end{aligned}\right.\]
For every $t\in[0,1]$, $f_t$ is the identity on $\mathbb{R}\times (-\infty,0]\cup \mathbb{R}\times [1,\infty)$, and satisfies $f_t(x+1,y)=f_t(x,y)+(1,0)$. Moreover, for every $t\in(0,1]$, a point $(x,y)$ is a fixed point of $f_t$ if and only if it is  a critical point of $g$. Let $\mathcal{F}$ be the foliation whose leaves are the integral curves of the gradient vector field $(x,y)\mapsto (\partial_1 g(x,y),\partial_2 g(x,y))$ of $g$. As will be proved in Appendix \ref{S: generating fucntion}, $\mathcal{F}$ is a transverse foliation of $I$.

 We know that
\[\partial_1 g(x,y)=\left\{\begin{array}{ll} 0 &\text{ for } y\le 0 \text{ or } y\ge 1,\\
\pi\sin(2\pi x)\int_0^y\varphi(s)ds &\text{ for }0<y<1,
\end{array}\right.\]
 and that
\[\partial_2 g(x,y)=\left\{\begin{array}{ll} 0 &\text{ for } y\le 0 \text{ or } y\ge 1,\\
 y\sin^2\frac{\pi}{y}+\varphi(y)\sin^2(\pi x) &\text{ for }0<y<1.
\end{array}\right.\]
So, the set of critical points of $g$ is
\[C=\{(n,\frac{1}{m}): n\in\mathbb{Z}, m\in\mathbb{N}\}\cup\mathbb{R}\times (-\infty,0]\cup\mathbb{R}\times [1,\infty),\]
and one deduces that $\partial_2 g(x,y)>0$ for $(x,y)\notin C$.

We define an equivalence relation $\sim$ on $\mathbb{R}^2$ by
\begin{eqnarray*}
\left\{\begin{array}{ll} (x,y)\sim (x',y') &\text{ for } y,y'\le 0,\\
(x,y)\sim (x+1,y)   &\text{ for }0<y< 1,\\
(x,y)\sim (x',y') &\text{ for } y,y'\ge 1.
\end{array}\right.
\end{eqnarray*}
Then, $\mathbb{R}^2/_{\sim}$ is a sphere,  $f_1$  descends to an area preserving homeomorphism $f'$ of the sphere, $I$ descends to an identity isotopy $I'$ of $f'$, and $\mathcal{F}$ descends to  a transverse foliation $\mathcal{F}'$ of $I'$. Moreover, one knows that $\mathrm{Fix}(I')=\mathrm{Fix}(f')=\mathrm{Sing}(\mathcal{F}')$, where $\mathrm{Sing}(\mathcal{F}')$ is the set of singularities of $\mathcal{F}'$. We denote by $S$  and $N$ the  two points $\mathbb{R}\times(-\infty,0]/\sim$ and $\mathbb{R}\times[1,\infty)/\sim$  in the sphere respectively.
\begin{figure}[h]
\center
   \includegraphics[width=3cm]{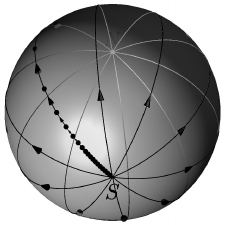}
  \caption{A sketch map of  $\mathcal{F}'$}
\end{figure}

 The fixed point $S$ is not  isolated in $\mathrm{Fix}(I')$, and
clearly $f'$ can be blown up at $S$, so $\rho_s(I',S)$ is reduced to $0$; $N$ is isolated in $\mathrm{Fix}(f')$ and is  a sink of $\mathcal{F}'$; and all the other fixed points of $f'$ are isolated in  $\mathrm{Fix}(f')$ and are saddles of $\mathcal{F}'$. Let $I^*$ be an identity isotopy of $f'$  fixing $S$ such that  $\rho_s(I^*,S)$ is reduced to $-1$. Recall Remark \ref{R: ac means torsion-low},  $I^*$ is torsion-low at $S$. We will prove that there does not exist any torsion-low maximal isotopy $I''$ such that  $\rho_s(I'',S)$ is reduced to $-1$.

 Indeed, a maximal isotopy of $f'$  fixes either  all the fixed points of $f'$ (in which case, the  isotopy is homotopic to $I'$ relatively to $\mathrm{Fix}(f')$) or exactly two fixed points. If  $I''$ is  a maximal isotopy of $f$ such that $\rho_s(I'',S)$ is reduced to $-1$, then $I''$ fixes exactly two fixed points. Denote by $\{S,z_1\}$ the set of fixed points of $I''$. One knows that $z_1$ is an isolated fixed point of $f'$, and that $J^{-1}_{z_1}I''$ is equivalent to $ I'$ as local isotopies at $z_1$. Therefore, $J^{-1}_{z_1}I''$ does not have a negative rotation type at $z_1$, and hence $I''$ is not torsion-low at $z_1$.
\end{example}

\begin{example}\label{Ex: zero not always in rotation set}(Example of Remark \ref{R: globle torsion-low 2})

 In this example, we will construct an orientation and area preserving homeomorphism $f$ of the sphere without isolated fixed points.  However,  we will show that for any maximal isotopy $I$ of $f$, $\mathrm{Fix}(I)$ contains isolated point.

Let $g$ be a homeomorphism on $\mathbb{R}\times[0,1]$ that is defined by
 \[g(x,y)=\left\{\begin{array}{ll}(x,y) &\text{ for } 0\le y\le \frac{1}{3},\\
 (x+3y-1,y)&\text{ for } \frac{1}{3}< y\le \frac{2}{3},\\
 (x+1,y)&\text{ for } \frac{2}{3}< y \le 1.\end{array}\right. \]
Like in Example \ref{Ex: area preserving condition is necessay},  we define an equivalence relation $\sim$ on $\mathbb{R}\times[0,1]$, and get a sphere as the quotient space. Moreover, $g$ descends to an orientation and area preserving diffeomorphism $f$ of the sphere that has infinitely many fixed points, and every fixed point of $f$ is not isolated in $\mathrm{Fix}(f)$. We will prove for every maximal isotopy $I$ of $f$, $\mathrm{Fix}(I)$ contains  an isolated point.

Denote by $N$ and $S$ the two components of $\mathrm{Fix}(f)$ respectively. Let us observe the properties of any maximal isotopy of $f$. Indeed, if $I$ is a maximal isotopy of $f$, it satisfies one of the following properties:
\begin{itemize}
\item[-] The set of fixed points of $I$ is the union of  $N$ (resp. $S$) and a point $z$ in $S$ (resp. $N$). In this case, $z$ is isolated in $\mathrm{Fix}(I)$;
\item[-] The set of fixed points of $I$ is the union of a point $z_1$ in $N$ and a point $z_2$ in $S$. In this case, both $z_1$ and $z_2$ are isolated in $\mathrm{Fix}(I)$;
\item[-] The set of fixed points of $I$ is a subset of  $N$ (resp. $S$) with exactly two points $z_1$ and $z_2$. In this case, both $z_1$ and $z_2$ are isolated in $\mathrm{Fix}(I)$.
\end{itemize}
In any case, $\mathrm{Fix}(I)$ contains  an isolated point.
\end{example}

\appendix

\section{Prime-ends compactification and rotation number}\label{S: prime ends compactification}

Let $f:\mathbb{R}^2\to \mathbb{R}^2$ be an orientation and area preserving homeomorphism and $U\subsetneq \mathbb{R}^2$ be a topological disk that is invariant under $f$. We will study the dynamics of $f|_{\partial U}$ near the boundary $\partial U$.

On one hand, we consider the one point campactification of $U$ and can get a homeomorphism of a sphere. To avoid too many  notations, we will still denote the point added by $\partial U$. On the other hand, by Carath\'eodory's prime-ends theory (see \cite{Lecalvezprimeendsrotationnumberandperiodicpoints} for example), we can compactify $U$ by adding a circle $S^1$ on the boundary and extend $f|_{U}$ continuously to the boundary. So we get a blow-up of $f|_{U}$ at the point (the end) $\partial U$. We define the \emph{prime-end rotation number} $\rho(f,\partial U)$  of $f$ at $\partial U$ to be the Poincar\'e's rotation number ($\in\mathbb{R}/\mathbb{Z}$) of the homeomophism on the boundary $S^1$. In particular, if $\partial U\subset \mathrm{Fix}(f)$, then $\rho(f,\partial U)=0\in\mathbb{R}/\mathbb{Z}$ (\cite[Propositions 3.6  and Proposition 5.7 ]{Lecalvezprimeendsrotationnumberandperiodicpoints}).

Moreover, let $I=(f_t)_{t\in[0,1]}$ be an isotopy from the identity to $f$. We will prove the following result:
\begin{proposition}\label{P: zero prime end rotation number}
If $I$ fixes every point of $\partial U$ and a point $z_0$ in $U$, then
\begin{itemize}
\item[i)] for all $t\in[0,1]$, the extension  of $f_t$ to $S^1$ is the identity map;
\item[ii)] the blow-up rotation number $\rho(I,\partial U)=0\in\mathbb{R}$.
\end{itemize}
\end{proposition}

\begin{remark}
The condition that $I$ fixes a point in $U$ is to make sure that it is a local isotopy at $\partial U$.
\end{remark}

We will mainly follow the idea in \cite{Lecalvezprimeendsrotationnumberandperiodicpoints}. Before proving the proposition, we will recall some definitions and results (also in \cite{Lecalvezprimeendsrotationnumberandperiodicpoints}).

An \emph{end-cut} of $U$ is an arc $\gamma: [0,1)\to U$ such that $\lim_{s\to 1-}\gamma(s)$ is a point in $\partial U$. A point $z\in\partial U$ is \emph{accessible} (from $U$) if it is the endpoint of some end-cut in $U$. Note that accessible points are dense in $\partial U$.
A \emph{cross-cut} of $U$  is  a simple arc $\gamma:(0,1)\to U$  joining two points of $\partial U$ such that each of the  two components of $U\setminus \gamma$  has a boundary point in $\partial U$ different from the endpoints of $\gamma$.
 A \emph{cross-section} of $U$ is any connected component of $U\setminus \gamma$ for some cross-cut $\gamma$.
A \emph{chain} for $U$  is a sequence $(D_n)_n$ of cross-sections such that $D_i\subset D_j$ for all $i\ge j$ and $\partial_U D_i\cap \partial_U D_j=\emptyset$ for all $i\ne j$. We say that
a chain $(D_n)_n$ \emph{divides} a cross-section $D$ if $D_i\subset D$ for all sufficiently large $i$. We say that a chain $(D_n)_n$ \emph{divides} a chain $(D'_n)_n$ if  $(D_n)_n$ divides $D'_m$ for all $m$.  Two chains $(D_n)_n$ and $(D'_n)_n$ are \emph{equivalent} if  $(D_n)_n$ divides $(D'_n)_n$ and $(D'_n)_n$ divides $(D_n)_n$. A chain $(D_n)_n$ is \emph{prime} if it divides $(D'_n)_n$ whenever $(D'_n)_n$ is a chain
that divides it.    An equivalence class of prime chains is called a \emph{prime-end} of $U$. We say that a prime-end divides a cross-section $D$ if one of its representative divides $D$.

We topologize the set of all prime-ends of $U$, together with $U$, by defining a basis of open sets consists of all the set with the form $D\sqcup\{\text{the prime-ends divide $D$}\}$ for some cross-section $D$, together with the open subset of $U$. With the topology, the set of all prime-ends of $U$, together with $U$, is homeomorphic to a closed disk. We will denote it by $U\sqcup S^1$.
We say that a prime-end $p\in S^1$ is \emph{accessible} if there is an end-cut $\gamma$ such that $\gamma(t)\to p$ in $U\sqcup S^1$ as $t\to 1-$. Note that accessible prime-ends are dense in $S^1$.

\begin{proof}[Proof of the first part of Proposition \ref{P: zero prime end rotation number}] We will prove the result for $f$ (for $f_t$, the proof is similar).  We use the same notation $f$ for the extension of $f|_{U}$ to $U\sqcup S^1$.

We will prove the result by contradiction. Suppose that  $f|_{S^1}$ is not the identity, then there exists an interval  $I\subset S^1$ such that $f(I)\cap I=\emptyset$. By the density of accessible prime-ends, we can choose $p\in I$ and an end-cut $\gamma$ such that the end of $\gamma$ in $S^1$ is $p$. We denote by $z_1$ the end of $\gamma$ in $\partial U$.

Because $\rho(f,\partial U)=0\in\mathbb{R}/\mathbb{Z}$ (Proposition 5.7 of \cite{Lecalvezprimeendsrotationnumberandperiodicpoints}), $f^2(p)\ne p$. By choosing $\gamma$ ``short" enough, we can suppose that $\gamma$, $f(\gamma)$ and $f^2(\gamma)$ are pairwise disjoint. Let $I_1\subset S^1$ be the closed interval bounded by $p$ and $f(p)$ such that $f^2(p)\notin I_1$. Then $I_1\cap f(I_1)=\{f(p)\}$. We join $\gamma(0)$ and $f(\gamma(0))$ by a simple arc $\eta$ and get a simple arc $\sigma=\gamma\cup \eta\cup f(\gamma)$.  Since $\sigma$ extends in $S^1$ to two different points $p$ and $f(p)$, it separates $U$ to two open topological disks. We denote by $D_1$ the one bounded by $I_1\cup \sigma$ in $U\sqcup S^1$, and by $D_2$ the other one (see Figure \ref{fig: prime-ends}).
Recall that $f(I_1)\cap I_1=\{f(p)\}$. So, for each ``short" enough end-cut $\gamma'$ in $D_1$, $f(\gamma')\subset D_2$.

 On the other hand, we consider $D_1\subset U\subset \mathbb{R}^2$. Recall that both ends of $\sigma$ extends in $\partial U\subset \mathbb{R}^2$ to $z_1$. So, $\sigma\cup\{z_1\}$ is a simple loop in $\mathbb{R}^2$, and separates $\mathbb{R}^2$ to two components. We denote by $C_1$ the component containing $D_1$, and by $C_2$ the component containing $D_2$.

 We will deduce that $C_1\cap \partial U=\emptyset$, which means $C_1=D_1$.  In fact, if  $C_1\cap \partial U\ne \emptyset$, by the density of accessible points, there exists an end-cut $\gamma'\subset D_1\subset C_1$ whose end in $\partial U$ is a point $z_2\ne z_1$. By choosing $\gamma'$ ``short" enough, we get $f(\gamma')\subset D_1$.   we get a contradiction.

 \begin{figure}[ht]
 \subfigure[On $U\sqcup S^1$]{
 \begin{minipage}[b]{0.35\linewidth}
   \center
  \includegraphics[width=0.6\linewidth]{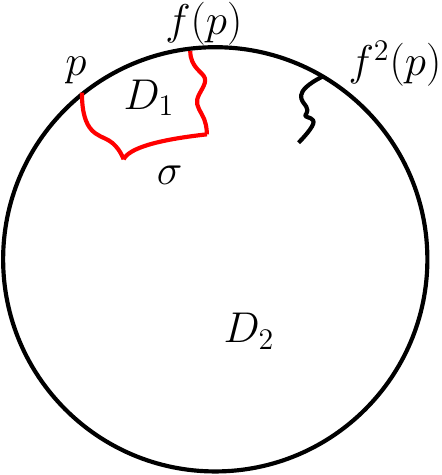}
 \end{minipage}
 }
 \hfill
 \subfigure[On $\mathbb{R}^2$]{
 \begin{minipage}[b]{0.45\linewidth}
   \center
  \includegraphics[width=0.6\linewidth]{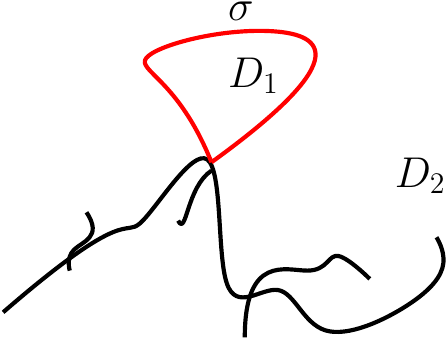}
 \end{minipage}
 }
 \caption{}\label{fig: prime-ends}
 \end{figure}

Now, we know that all the end-cuts in $D_1$ extend to $z_1$ in $\partial U$. It means that there is no cross-section in $D_1$. Recall the topology of $U\sqcup S^1$, we get a contradiction.
\end{proof}

\begin{proof}[Poof of the second part of Proposition \ref{P: zero prime end rotation number}] Let us consider a universal cover $\pi:\mathbb{R}^2_{-}\to U\sqcup S^1\setminus \{z_0\}$ and the natural lift $(\widetilde{f}_t)_{t\in[0,1]}$ of $(f_t)_{t\in[0,1]}$ to $\mathbb{R}^2_{-}$. We know that $f_t|_{S^1}$ is the identity and $\widetilde{f}_0$ is the identity. We will prove that for each $k\in\mathbb{Z}$, $\{t\in[0,1]:\widetilde{f}_t(x,0)=(x+k,0)\}$ is an open subset of $[0,1]$, and hence for all $t\in[0,1]$, the restriction of $\widetilde{f_t}$ to $\mathbb{R}\times\{0\}$ is the identity. It implies $\rho(I,\partial U)=0\in\mathbb{R}$.

Fix $t_0\in[0,1]$, and suppose that $\widetilde{f}_{t_0}(x,0)=(x+k,0)$.

Choose two different accessible points $z_1$, $z_2 \in \partial U$, and an end-cut $\gamma_1\subset U$ that extends to $z_1\in\partial U$. We can extend $\gamma_1$ and get an arc in $U$ that joining $z_0$ to $z_1$. We still denote it by $\gamma_1$. By choosing $\gamma_1$ properly, we can suppose that it is a simple arc. Since $z_2$ is a fixed point of $f_{t_0}$, there exist $\delta>0$ and  a neighborhood $V$ of $z_2\in\mathbb{R}^2$ such that for all $ |t-t_0|<\delta $, $f_t(V)\cap\{\gamma_1\}=\emptyset$. Choose an end-cut $\gamma_2\subset V$ that extends to $z_2\in\partial U$.

 Denote by $p_1\in S^1$ one end point of $\gamma_1$. The lifts of $\gamma_1\cup\{p_1\}$ to $\mathbb{R}^2_{-}$  are infinitely many arcs from $\mathbb{R}\times\{0\}$ to the infinity and separate $\mathbb{R}^2_{-}$ into infinitely many components. We choose one component and denote it by $\widetilde{C}_0$. We denote by $\widetilde{C}_k=\widetilde{C}_0+(k,0)$, which is still a component of $\pi^{-1}(U\sqcup S^1\setminus (\gamma_1\cup\{p_1,z_0\}))$.

 Denote by $p_2\in S^1$ the end point of $\gamma_2$, and $\widetilde{p}_2\in \widetilde{C}_0$ one lift of $p_2$. Let $\widetilde{\gamma}_2\subset \widetilde{C}_0$ be one lift of $\gamma_2$. We know that $\widetilde{f}_{t_0}(\widetilde{p}_2)=\widetilde{p}_2+(k,0)$, and hence $\widetilde{f}_{t_0}(\widetilde{\gamma}_2)\subset \widetilde{C}_k$. Recall that $f_t(V)\cap\{\gamma_1\}=\emptyset$ and $\gamma_2\subset V$. By the continuity of $(\widetilde{f}_t|_{U})_{t\in[0,1]}$ with respect to $t$, we know that for $|t-t_0|<\delta$, $\widetilde{f}_{t}(\widetilde{\gamma}_2)\subset \widetilde{C}_k$ and hence $\widetilde{f}_t(\widetilde{p}_2)=\widetilde{p}_2+(k,0)$.
Noting that $f_t$ is the identity map on $S^1$, we must have $\widetilde{f}_t(x,0)=(x+k,0)$ for all $x\in\mathbb{R}$.
\end{proof}

\section{Construction of a transverse foliation from the generating function}\label{S: generating fucntion}

Let $f$ be a diffeomorphism of $\mathbb{R}^2$ and $g: \mathbb{R}^2\rightarrow \mathbb{R}$ be a $\mathcal{C}^2$ function. We call $g$ a \emph{generating function}\footnote{Our definition of generating function is indeed the same as the one in Section 9.2 of \cite{Mcduff}. We can get all the statements in this paragraph by repeating their proofs.} of $f$ if  $\partial^2_{12}g<1$, and if
\[ f(x,y)=(X,Y)\Leftrightarrow\left\{\begin{aligned} X-x & = & \partial_2 g(X,y),\\ Y-y & = &  -\partial_1 g(X,y) . \end{aligned}\right.\]
Every $\mathcal{C}^2$ function $g: \mathbb{R}^2\rightarrow \mathbb{R}$ satisfiying $\partial^2_{12} g \le c<1$ defines a diffeomorphism $f$ of $\mathbb{R}^2$ by the previous equations; on the other side, for every area preserving diffeomorphism $f$ of $\mathbb{R}^2$ satisfying $0<\varepsilon\le\partial_1(p_1\circ f)\le M<\infty $, where $p_1$ is the projection onto the first factor, there exists a generating function of $f$. Moreover,  the Jacobian matrix $J_{f}$ of $f$ is  equal to
\[\frac{1}{1-\partial^2_{12}g(X,y)}
\begin{pmatrix} 1 &    \partial^2_{22}g(X,y)\\
-\partial^2_{11}g(X,y)   & -\partial^2_{11}g(X,y) \partial^2_{22}g(X,y) +(1-\partial^2_{12} g(X,y))^2
\end{pmatrix}.\]
Since  $\det J_f=1$, the diffeomorphism  $f$  is orientation and area preserving.  A point $(x,y)$ is a fixed point of $f$ if and only if it is a critical point of $g$.  We can naturally define an identity isotopy $I=(f_t)_{t\in[0,1]}$ of $f$ such that $f_t$ is generated by $tg$. Precisely, the diffeomorphisms $f_t$ are defined by the following equations:
\[ f_t(x,y)=(X^t,Y^t)\Leftrightarrow\left\{\begin{aligned} X^t-x & = & t \partial_2 g(X^t,y),\\ Y^t-y & = &  -t \partial_1 g(X^t,y) . \end{aligned}\right.\]

In this section, we suppose that $f$ is a diffeomorphism of $\mathbb{R}^2$, and that $g$ is a generating function of $f$. We will construct a transverse foliation of $I$. More precisely, denote by  $\mathcal{F}$ the foliation  whose leaves are the integral curves of the gradient vector field $(x,y)\mapsto(\partial_1 g(x,y), \partial_2 g(x,y))$ of $g$, we will prove the following result:
\begin{theorem}
 The foliation  $\mathcal{F}$ is a transverse foliation of $I$.
\end{theorem}

\begin{proof}
We will prove the theorem by constructing an identity isotopy $I'$ of $f$ that is homotopic to $I$ relatively to $\mathrm{Fix}(f)$ and satisfies that for every $z\in\mathbb{R}^2\setminus\mathrm{Fix}(f)$, the trajectory of $z$ along $I'$ is positively transverse to $\mathcal{F}$.

We  define  $I'=(f'_t)_{t\in[0,1]}$ by the following equations:
\[f'_t(x,y)=\left\{\begin{aligned}(x, y)+2t(X-x,0) & & \textrm{for } 0\leq t\leq 1/2 ,\\ (X,y)+(2t-1)(0, Y-y) & & \textrm{for } 1/2\leq t\leq 1,\end{aligned}\right.\]
 where $(X,Y)=f(x,y)$.
 \begin{lemma}
 One can verify that $I'$ is  an identity isotopy of $f$.
 \end{lemma}
 \begin{proof}
 We know that $\partial_1 X(x,y)=1/(1-\partial^2_{12}g(X,y))>0$. By  computing the determinant of the Jacobian matrix of $f'_t$, we know that $\det J_{f'_t}>0$ for every $t\in[0,1]$.
To prove that $I'$ is an isotopy, we only need to check that $f'_t$ is a bijection for every $t\in(0,1)$.

For $t\in(0,\frac{1}{2})$,
write $f'_t(x,y)=(\varphi_{t,y}(x),y)$.  One deduces
\[\frac{\partial}{\partial x}\varphi_{t,y}(x)=2t\partial_1 X(x,y)+(1-2t)\ge 1-2t> 0.\]
So, $f'_t$ is a bijection.

For $t=\frac{1}{2}$, $f'_{1/2}(x,y)=(X,y)$. We have $\partial_1 X(x,y)>0$, so $f'_{1/2}$ is an injection. Recall that $X-x=\partial_2 g(X,y)$, so $x=X-\partial_2 g(X,y)$, $f'_{1/2}$ is a surjection.

For $t\in(\frac{1}{2},1)$, write $f'_t(x,y)=(X,\psi_{t,X}(y))$.
One deduces
 \[\frac{\partial}{\partial y}\psi_{t,X}(y)=1-(2t-1)\partial^2_{12}g(X,y)>2-2t>0.\]
 So, $(X,y)\mapsto (X,\psi_{t,X}(y))$ is a bijection, and hence $f'_t$ is a bijection.
 \end{proof}

 By definition, we know $\mathrm{Fix}(I)=\mathrm{Fix}(I')=\mathrm{Fix}(f)$. If $\mathrm{Fix}(f)$ is empty or contains more than one point, we know that $(\mathrm{Fix}(f),I)\sim (\mathrm{Fix}(f),I')$, and hence a transverse foliation of $I'$ is also a transverse foliation of $I$; if $\mathrm{Fix}(f)$ is reduced to one point, we can  deduce the same result by the following lemma and the fact that $\pi_1(\mathrm{homeo}_0(\mathbb{R}^2,0))\cong\mathbb{Z}$.

\begin{lemma}
If $0$ is an isolated fixed point of $f$, one can deduce that $\rho(I,0)=\rho(I',0)\in[-1,1]$.
\end{lemma}

\begin{proof}
Let $\theta:[0,1]\rightarrow \mathbb{R}$ and $\theta':[0,1]\rightarrow \mathbb{R}$  be the continuous functions that satisfies
$\theta(0)=\theta'(0)=0$ and
\[\frac{J_{f_t}(0)\begin{pmatrix}1\\ 0\end{pmatrix}}{\|J_{f_t}(0)\begin{pmatrix}1\\ 0\end{pmatrix}\|}=\begin{pmatrix}\cos\theta(t)\\ \sin\theta(t)\end{pmatrix}, \quad
 \frac{J_{f'_t}(0)\begin{pmatrix}1\\ 0\end{pmatrix}}{\|J_{f'_t}(0)\begin{pmatrix}1\\ 0\end{pmatrix}\|}=\begin{pmatrix}\cos\theta'(t)\\ \sin\theta'(t)\end{pmatrix}.\]
 To simplify the notations, we write
 \[\mathrm{Hess}(g)(0)=\begin{pmatrix}\varrho, \sigma\\ \sigma,\tau\end{pmatrix}.\]
One knows
 \[J_{f_t}(0)\begin{pmatrix}1\\ 0\end{pmatrix}=\frac{1}{1-t\sigma}\begin{pmatrix}1\\ -t\varrho\end{pmatrix}.\]
Because $1-t\sigma>0$ for all $t\in[0,1]$, we deduce that $\theta(t)$ belongs to $(-\frac{\pi}{2},\frac{\pi}{2})$ for all $t\in[0,1]$.

 For $t\in[0,\frac{1}{2}]$,
 \[J_{f'_t}(0)\begin{pmatrix}1\\ 0\end{pmatrix}=\begin{pmatrix}(1-2t)+2t\partial_1 X(0,0)\\ 0\end{pmatrix},\]
and $(1-2t)+2t\partial_1 X(0,0)>0$. So $\theta'(t)$ is equal to $0$ for all $t\in[0,\frac{1}{2}]$.

  For $t\in[\frac{1}{2},1]$,
 \[J_{f'_t}(0)\begin{pmatrix}1\\ 0\end{pmatrix}=\begin{pmatrix}\partial_1 X(0,0)\\ (2t-1)\partial _1 Y(0,0)\end{pmatrix},\]
and $\partial_1 X(0,0)>0$. So $\theta'(t)$ belongs to $(-\frac{\pi}{2},\frac{\pi}{2})$ for all $t\in[\frac{1}{2},1]$.

 Therefore, we deduce that $\theta(1)=\theta'(1)\in (-\frac{\pi}{2},\frac{\pi}{2})$, and hence $\rho(I,0)=\rho(I',0)\in[-1,1]$.
\end{proof}

Next, we show that  $I'$ intersects $\mathcal{F}$ positively transverse.
\begin{lemma}\label{L: construct a transverse foliation}
For every $z=(x,y)$ that is not a fixed point of $f$,  the path $\gamma_z:t\mapsto f'_t(x,y)$ is   positively transverse to $\mathcal{F}$.
\end{lemma}

\begin{figure}[H]
\center
   \includegraphics[width=3.5cm]{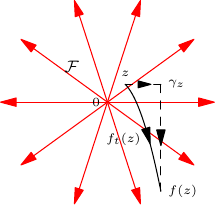}
  \caption{The dynamics and foliation generated by $g(x,y)=x^2+y^2$}
\end{figure}

\begin{proof}[Proof of Lemma \ref{L: construct a transverse foliation}]
To prove the positive transversality, the rough idea is to calculate the determinate of the matrix formed by two vectors tangent respectively to the curve $\gamma_z$ and to the leaf at the intersecting point.

For $t\in[0,1/2]$,
\begin{align*}
&\det\begin{pmatrix}
2(X-x) & \partial_1 g(f'_t(x,y))\\
0 &\partial_2 g(f'_t(x,y))
\end{pmatrix}\\
=&2(X-x)\partial_2 g(f'_t(x,y))\\
=&2(X-x)\partial_2 g(2tX+(1-2t)x,y)\\
=&2(X-x)[\partial_2 g(X,y)+(2t-1)(X-x)\partial^2_{12}g(\xi, y)]  \\
=&2(X-x)^2[1-(1-2t)\partial^2_{12}g(\xi, y)]\ge 0,
\end{align*}
where $\xi$ is a real number between $x$ and $X$, and the inequality is strict if $X\ne x$.

 For $t\in[1/2,1]$, similarly, we have
\[\det\begin{pmatrix}
0 & \partial_1 g(f'_t(x,y))\\
2(Y-y) &\partial_2 g(f'_t(x,y))
\end{pmatrix}
=2(Y-y)^2[1-(2t-1)\partial^2_{12} g (X ,\eta)]\ge 0,\]
where $\eta$ is a real number between $y$ and $Y$, and the inequality is strict if $Y\ne y$.

Since $z$ is not a fixed point, either $X\ne x$ or $Y\ne y$. If both  inequalities are satisfied, $\gamma_z$ intersects $\mathcal{F}$ positively transversely; if $X\ne x$ and $Y= y$, $\gamma_z|_{t\in[0,\frac{1}{2}]}$  intersects $\mathcal{F}$ positively transversely, and $\gamma_z|_{t\in[\frac{1}{2},1]}$ is reduced to a point; if $X= x$ and $Y\ne y$, $\gamma_z|_{t\in[0,\frac{1}{2}]}$ is reduced to a point, and $\gamma_z|_{t\in[\frac{1}{2}, 1]}$ intersects $\mathcal{F}$ positively transversely.
\end{proof}
Since $I$ and $I'$ are homotopic relatively to $\mathrm{Fix}(f)=\mathrm{Fix}(I)=\mathrm{Fix}(I')$, and $\mathcal{F}$ is a transverse foliation of $I'$, $\mathcal{F}$ is also a transverse foliation of $I$.
\end{proof}

\section*{Acknowledgements}

The author wish to thank Patrice Le Calvez for  proposing me the subject, and thank Fran\c{c}ois B\'eguin, Sylvain Crovisier, and Fr\'ed\'eric Le Roux  for explaining to me their  results  and especially Le Roux for some valuable comments.

\end{document}